\documentclass[10pt,a4paper]{article}
\usepackage[top=3cm, bottom=3cm, left=2cm, right=2cm]{geometry}
\usepackage{ART-SFVEM,styleBiot}
\usepackage[
  backend=biber,    
  style=numeric,    
  doi=true,         
  url=false,        
  isbn=false        
]{biblatex}
\author{Francesca Marcon, Stefano Scialò}
\title{Hybrid-Dimensional Biot Problem with an Optimization Based Domain Decomposition Approach}
\date{}
\begin{document}
\maketitle
\begin{abstract}
The present work proposes a numerical approach for solving coupled flow and mechanics problems in fractured porous media, represented as mixed-dimensional domains. In this formulation, the elements of the 3D mesh are allowed to arbitrarily intersect the fractures. Displacements are discontinuous across fractures through the use of the eXtended Finite Element Method (XFEM) on the 3D mesh. The mechanical problem is formulated as a saddle-point problem, in which Lagrange multipliers are used to enforce displacement continuity across the fractures. The resulting Lagrange multipliers represent the stress field acting on the fracture surfaces. Likewise, the pressure field is allowed to be discontinuous across fractures through the XFEM formulation on the non-conforming mesh and is computed using an optimization-based domain decomposition strategy specifically designed for mixed-dimensional problems. The fixed-stress splitting scheme is employed to decouple the flow and mechanics subproblems, while the mixed-dimensional pressure problem is solved at each fixed-stress iteration using the Conjugate Gradient (CG) method. The combination of the fixed-stress scheme and the CG solver proves to be highly effective for this class of problems. 
\end{abstract}
\section{Introduction}
In recent years, coupling flow and mechanical deformation has gained attention due to its significant role in many important applications such as geothermal energy exploitation, long-term storage of carbon dioxide CO$_2$, and the study of subsurface hydrology and the seismic activity in the Earth'crust.
The considered rock masses are characterized by the presence of fracture networks, whose modeling has a significant impact on correctly capturing the hydromechanical properties of the medium.
With this aim, the discrete fracture-matrix (DFM) models represent a suitable approach for treating fractures. To reduce the computational cost of the simulations, DFM models are derived from the continuum model averaging the governing equations on the fracture domain in the normal direction and, hence, reducing the representation of fractures to codimensional-one manifolds embedded in the porous matrix. Clearly, additional equations are introduced at the fracture-matrix interface to close the problem.
\\
A classical approach for modelling the coupling between ﬂow and geomechanics consists in the Biot coupled model, which yields a fully coupled system of equations. Solving this system in a monolitic approach results to be very computational expensive.
A classical approach to reduce this cost is applying sequential methods based on operator splitting techniques. The most used in this context is the ﬁxed-stress
split introduced \cite{KIM2011CMAME}, that shows good convergence properties and robustness also analyzed in \cite{Mikelic2013,Storvik2019}. The fixed-stress splitting scheme adds a stabilization term in the flow equation, alternatively one can add a stabilization term in the mechanics equation, the so-called undrained splitting scheme \cite{KIM2011Undrained}. Other splitting schemes can also be found in \cite{TURSKA1993}.

A wide range of numerical schemes for porous‐media problems with fractures can be found in the literature.
 In recent years, a number of significant works have focused on developing different modeling approaches for poromechanical coupling. For example, in \cite{Garipov2016} the authors investigate a mixed finite element–finite volume discretization of a 3D–2D nonlinear Biot model; in \cite{BYZ2017} a 2D-1D Brinkman-Biot formulation is analyzed, deriving it  from a full 2D-2D model through a topological reduction approach; in \cite{Berge2020} the authors study a finite volume discretization of the 3D Biot equation coupled with contact mechanics conditions; and in \cite{Brezina2024} a finite element discretization of the Biot equation is considered both in the 3D domain and in the 2D fracture, together with a sensitivity analysis of the two fixed‐stress stability parameters.
To the best of our knowledge, the majority of the approaches discussed above require the generation of a mesh conforming to the fractures, even when a large number of fractures is present and forms an intricate network of intersections.

In this work, starting from the hydro-mechanical model proposed in \cite{JHA2014}, we extend to the poromechanics framework the 3D–2D coupled problems with discontinuous solutions on non conforming meshes proposed in \cite{S2024Finel}.
Therefore, we consider DFM problems on non conforming meshes, solved via an optimization-based
domain decomposition approach and allowing for possible discontinuous solutions. The methodology proposed in \cite{S2024Finel} introduces a five-field domain decomposition scheme combined with the XFEM to solve the 3D–2D Darcy problem. This formulation enables the decoupling of the global 3D problem from the local 2D problems associated with each fracture.

The purpose of the present work is to address the hydromechanically coupled problem in a fractured domain with multiple fractures, 
while allowing for non‑conforming discretizations and discontinuous solution fields.
Employing the fixed‑stress operator splitting scheme enables the flow and mechanical problems to be efficiently decoupled.
The flow subproblem is addressed using the optimization‑based strategy of \cite{S2024Finel}, whereas the mechanics subproblem is treated following the approach in \cite{JHA2014}.
In both cases, the extended Finite Element Method (XFEM) is exploited to capture discontinuities and irregular solution behavior on non‑conforming meshes.

The paper is organized as follows.
In Section~\ref{sec:modelpb} the model problem of interest is described. In Section~\ref{sec:discrform} its discrete formulation is provided along with a proof of the convergence of the fixed stress method for this kind of problem formulation. Sections~\ref{sec:FlowProblem} and \ref{sec:mechProblem} report the solving strategies for the flow and mechanics subproblem. Numerical results are described in Section~\ref{sec:numres}, and conclusions follow in Section~\ref{sec:Conclusions}

\section{Model problem}\label{sec:modelpb}

Let us consider a 3D domain $\D$, representing a block of porous material crossed by a set of non-intersecting planar interfaces $F^j$, with $j=1,\ldots,\numFrac$. Let $\Dint$ denote the domain $\D$ excluding the fractures, i.e. $\Dint := \D\setminus \{\cup_j F^j\}$.
Let $\Gamma^i_D$ and $\Gamma^i_N$, with $i=m$ for the mechanics problem and $i=f$ for the flow problem, denote the portions of the boundary $\partial \D$ where Dirichlet and Neumann conditions apply, respectively. We have that, for $i=m$ or $f$, $\Gamma^i_D \cup \Gamma^i_N=\partial \D$ and $\Gamma^i_D\cap\Gamma^i_N = \emptyset$.
Moreover, for any $j=1,\ldots,\numFrac$ let $\gamma^j_D$ and $\gamma^j_N$ denote the portions of the boundary $\partial F^j$ where Dirichlet and Neumann conditions apply, hence $\gamma^j_D \cup \gamma^j_N=\partial F^j$ and $\gamma^j_D\cap\gamma^j_N = \emptyset$.

The boundary of $\Dint$ is instead split into three parts, $\partial \Dint := \mathring{\Gamma}^i_D \cup  \mathring{\Gamma}^i_N \cup \{\cup_j\omega^j\}$, with $i=m$ for the mechanics problem and $i=f$ for the flow problem and defined such that $\mathring{\Gamma}^i_D = \partial \Dint \cap \Gamma^i_D$, $\mathring{\Gamma}^i_N = \partial \Dint \cap \Gamma^i_N$ and for each $j=1,\ldots, \numFrac$ $\omega^j =  \partial \Dint \cap F^j$, this last part being the portion of the boundary of $\Dint$ that coincides with the $j$-th fracture $F^j$. 

For each $j=1,\ldots,\numFrac$ denoting by $\bs{n}^j$ the unit normal to the $j$-th fracture $F^j$, with fixed chosen orientation, we will further distinguish the two ''sides`` of any $\omega^j$ as $\omega^j_+$ and $\omega^j_-$ , with $\omega^j_+$ being the portion of $\omega^j$ with outward pointing unit normal equal to $\bs{n}^j$, i.e. $\bs{n}^j_+:=\bs{n}^j_{\omega^j_+}=\bs{n}^j$ and $\bs{n}^j_-:=\bs{n}^j_{\omega^j_-}=-\bs{n}^j$.

The linear Biot model defined on $\D$ coupled with the Darcy's law for the 2D fluid flow on each fracture $F^j$ results as follows.
\begin{problem}\label{pb:model}
Find the displacement $\bs{u}$, the 3D pressure $p^{\D}$, the 2D pressure $p^{F^j}$ and the effective normal traction $\bs{\Lambda}^j$(a Lagrange multiplier see \cite{JHA2014}) on each fracture  with $j=1,\ldots,\numFrac$ such that
\begin{align}
\frac{\partial}{\partial t} \left(\frac{1}{M}\, p^{\D}+\alpha\, \div \bs{u}\right) - \div\left(\KK{\D} \nabla p^{\D}\right) = g \quad \mbox{ in } \Dint \times (0, T]\,,
\\
- \div\left(\KK{F^j} \nabla p^{F^j}\right)  +\KK{\D}\nabla p^{\D,j}_{+}\cdot \bs{n}^j_+ +\KK{\D}\nabla p^{\D,j}_-\cdot \bs{n}^j_- =0 
\quad \mbox{ in } F^j \times (0, T]\;, \forall j=1,\ldots,\numFrac\,,
\\
\KK{\D}\nabla p^{\D,j}_{\pm}\cdot \bs{n}^j_{\pm} = -\eta^j \left(p^{\D,j}_{\pm} - p^{F^j} \right) \quad \mbox{ in } \omega^j_{\pm} \times (0, T]\;, \forall j=1,\ldots,\numFrac\,,
\\
\alpha \nabla p^{\D} - \div \bs{\sigma}(\bs{u}) = 0  \quad \mbox{ in } \Dint \times (0, T]\,,
\\
\left(\bs{\sigma}^j_{\pm}(\bs{u})- \alpha p^{\D,j}_{\pm}\bs{I}\right) \cdot  \bs{n}^j_{\pm} = \bs{\Lambda}^j - \alpha p^{F^j}  \bs{n}^j_{\pm}\quad \mbox{ in } \omega^j_{\pm} \times (0, T]\;, \forall j=1,\ldots,\numFrac\,,
\\
\bs{u}^j_+ = \bs{u}^j_-
\quad \mbox{ in } F^j \times (0, T]\;, \forall j=1,\ldots,\numFrac\,,
\end{align}
where $\frac{1}{M}$ is the compressibility constant, $\alpha$ is the Biot-Willis constant, $\KK{\D}=\frac{k^\D}{\mu}$, with $k^\D$ the  hydraulic permeability in $\Dint$ and $\mu$ fluid viscosity, while, for each fracture $F^j$, with $j=1,\ldots,\numFrac$, $\KK{F^j}=\frac{k_j^\parallel}{\mu}$ and $\eta^j=\frac{k_j^\perp}{\mu}$ being $k_j^\parallel$ and $k_j^\perp$, respectively, the effective hydraulic permeability of the fracture in the tangential and normal directions . The quantity $g$ represents a forced fluid extraction or injection process and $\bs{I}$ is the second order identity tensor. 
Since $p^{\D}$, $\bs{\sigma}(\bs{u})$ and $\bs{u}$ are considered sufficiently regular, for each $j=1,\ldots,\numFrac$ $p^{\D,j}_+$ and $p^{\D,j}_-$, $\bs{\sigma}^j_+$ and $\bs{\sigma}^j_-$, $\bs{u}^j_+$ and $\bs{u}^j_-$ denote the trace of $p^{\D}$, $\bs{\sigma}$ and $\bs{u}$ on $\omega^j_+$ and on $\omega^j_-$ respectively.

Moreover, the stress tensor $\bs{\sigma}$ and the strain tensor $\bs{\epsilon}$ satisfy the constitutive relations of linear elasticity, i.e.
\begin{equation}\label{eq:ElasticityConstitutiveRel}
\begin{aligned}
\bs{\sigma} (\bs{u}) = \bs{C} \bs{\epsilon}(\bs{u}) = \lambda tr(\bs{\epsilon}(\bs{u})) \bs{I} + 2\mu\bs{\epsilon}(\bs{u})\,,
\\
\bs{\epsilon}(\bs{u}) = \frac{1}{2}\left(\nabla \bs{u} +\nabla \bs{u} ^\intercal\right)\,,
\end{aligned}
\end{equation}
where $\lambda$ and $\mu$ are the Lamé coefficients.

For the sake of simplicity, let us consider homogeneous initial and boundary conditions
\begin{align}
    p^\D &= 0 &\mbox{ on } \mathring{\Gamma}^f_D \times (0, T]\,,
    \\
    \KK{\D} \nabla p^{\D} \cdot \bs{n}_{\Gamma^f_N} &= 0 &\mbox{ on } \mathring{\Gamma}^f_N \times (0, T]\,,
    \\
    p^{F^j} &= 0 &\mbox{ on } \gamma^j_D \times (0, T]\;, \forall j=1,\ldots,\numFrac\,,
    \\
   \KK{F^j} \nabla p^{F^j} \cdot \bs{n}_{\gamma^j_N} &= 0 &\mbox{ on } \gamma^j_N \times (0, T]\;, \forall j=1,\ldots,\numFrac\,,
   \\
   \bs{u} &= 0 &\mbox{ on } \mathring{\Gamma}^m_D \times (0, T]\,,
   \\
   \bs{\sigma}(\bs{u}) \cdot \bs{n}_{\Gamma^m_N} &=0 &\mbox{ on } \mathring{\Gamma}^m_N \times (0, T]\,,
   \\
  p^\D &= 0 &\mbox{ in } \mathring{\D} \mbox{ and } t=0\,,
  \\
p^{F^j} &= 0 &\mbox{ in } F^j  \mbox{ and } t=0\;, \forall j=1,\ldots,\numFrac\,,
  \\
  \bs{u}&= 0 &\mbox{ in } \mathring{\D} \mbox{ and } t=0\,.
\end{align}

We remark that the extension to nonhomogeneous initial and boundary conditions is well-established in the literature and follows the classical approach of Galerkin methods.
\end{problem}
We make the following assumptions on the physical coefficients of the model problem:
\begin{enumerate}[label=\textbf{A.\arabic*}]
    \item \label{assumptionCoeff_omega} Let $\alpha,\lambda,\mu$ be strictly positive constants, and $\frac{1}{M}$ and $\KK{\D}$ be nonnegative.
    \item \label{assumptionCoeff_frac} For each fracture $F^j$ with $j=1,\ldots,\numFrac$, let $\eta^j$ be a strictly positive constant and $\KK{F^j}$ be nonnegative.
\end{enumerate}

Let us now move to the weak formulation of the previous problem.
Let  $V^{f,\D}=\{q^\D\in \sobh{1}{\Dint}: q^\D = 0 \mbox{ on }\Gamma^f_D\}$ and for each $j=1,\ldots,\numFrac$ let $ V^{f,{F^j}}=\{q^{F^j}\in \sobh{1}{F^j}: q^{F^j} = 0 \mbox{ on }\gamma^j_D\}$ be the spaces of admissible 3D pressure and 2D pressure on each fracture, respectively, 
and $\bs{V}^m=\{\bs{v}\in [\sobh{1}{\Dint}]^3: \bs{v} = 0 \mbox{ on } \Gamma^m_D\}$ be the space of admissible displacements.
Hence, for any fracture $F^j$, with $j=1,\ldots,\numFrac$, naturally let $\bs{Q}^{m,j} = \left([\sobh{\frac12}{F^j}]\right)'$ be the space of admissible effective normal tractions. 

Hence the variational formulation of Problem \ref{pb:model} results as follows.
\begin{problem}\label{pb:contVar}
Find $\bs{u}\in\lebl{0,T;\bs{V}^m}$, $p^\D\in\lebl{0,T;V^{f,\D}}$ and for each $j=1,\ldots,\numFrac$ $p^{F^j}\in V^{f,{F^j}}$ and $\bs{\Lambda}^j\in \bs{Q}^{m,j}$ for a.e. $t\in(0,T)$ such that
\begin{align}
\begin{aligned}
\int_{\Dint} \frac{1}{M} \frac{\partial p^\D}{\partial t}  q^\D  + \int_{\Dint} \alpha \div \left(\frac{\partial \bs{u}}{\partial t}\right)  q^\D + \int_{\Dint} \KK{\D} \nabla p^\D \, \nabla q^{\D} + \sum_{j=1}^{\numFrac} \int_{\omega^j} \eta^j (p^{\D,j}_+\,q^{\D,j}_+ + p^{\D,j}_-\,q^{\D,j}_-)  \\ -\sum_{j=1}^{\numFrac}  \int_{\omega^j} \eta^j p^{F^j} \left(q^{\D,j}_+ + q^{\D,j}_-\right)
= \int_{\Dint} g\, q^\D  \quad \forall \; q^\D\in V^{f,\D}, \mbox{ for a.e. } t\in (0,T)
\end{aligned}
\\
\begin{aligned}
\int_{\omega^j} \KK{F^j} \nabla p^{F^j} \, \nabla q^{F^j} + \int_{\omega^j} 2 \eta^j\, p^{F^j} \, q^{F^j} - \int_{\omega^j}\eta^j \left(p^{\D,j}_+ + p^{\D,j}_-\right) q^{F^j} = 0  \quad \forall \; q^{F^j}\in V^{f,{F^j}}, \mbox{ for a.e. } t\in (0,T)\\ \mbox{and 
} \forall j=1,\ldots,\numFrac
\end{aligned}
\\
-\int_{\Dint} \alpha p^\D \div \bs{v} + \int_{\Dint} \bs{C} \bs{\epsilon}(\bs{u}) : \bs{\epsilon}(\bs{v}) 
+ \sum_{j=1}^{\numFrac} \dual[\omega^j]{\bs{\Lambda}^j-\alpha p^{F^j}\bs{n}^j}{\bs{v}^j_- - \bs{v}^j_+}
= 0 \quad \forall \; \bs{v}\in \bs{V}^{m}, \mbox{ for a.e. } t\in (0,T)
\\
\dual[\omega^j]{\bs{u}^j_- - \bs{u}^j_+}{\bs{\tau}^j} =0 \quad \forall \; \bs{\tau}^j \in \bs{Q}^{m,j} , \mbox{ for a.e. } t\in (0,T)\; \mbox{and 
} \forall j=1,\ldots,\numFrac \label{pb:contVar:EqLambda}
\end{align}
\end{problem}
\section{Discrete formulation}\label{sec:discrform}
In this section, we derive the discrete counterpart of Problem \ref{pb:contVar}, 
within the framework of 
the optimization based domain decomposition approach, first introduced in \cite{BPSa}, which allows for a great flexibility in terms of meshing the computational domain.
Hence we introduce and construct, independently, a tetrahedral mesh of $\D$, denoted by $\mathcal{T}^{\D}$ and for each fracture $F^j$, with $j=1,\ldots,\numFrac$, a triangular mesh denoted by $\mathcal{T}^{F^j}$. 
Let $h_{\D}$ and $h_{F^j}$ be the maximum size of the elements in the meshes $\mathcal{T}^{\D}$ and $\mathcal{T}^{F^j}$, respectively.
Notice that $\mathcal{T}^{\D}$ results to be non-conforming to the interfaces $\omega^j$. 

We exploit the eXtended Finite Element Method (XFEM) \cite{Xfem99_1,xfem99_2} as spatial discretization scheme for the 3D variables.
First, let us focus on the construction of discretization space for $p^{\D}$, denoted by $V^{f,\D}_h$. 
The discontinuous behaviour across each interface of $p^{\D}$ is described using linear Lagrangian finite elements on $\mathcal{T}^{\D}$ with enrichment basis functions describing the discontinuity.
Let $\Poly[f]{1}{\mathcal{T}^{\D}}$ denote the standard global linear Lagrangian finite element space defined on $\mathcal{T}^{\D}$, satisfying the zero Dirichlet condition on $\mathring{\Gamma}^f_D$.
To construct the enrichment basis functions, we follow a similar approach to the one proposed in \cite{SukumarXFEM2000}. Accordingly, for each fracture, an enrichment basis function is constructed based on its intersection with the domain.
If the fracture $F^j$ entirely crosses the domain, we consider the Heaviside function $\mathcal{H}^j:\mathbb{R}^3\to\mathbb{R}$ defined for any $\bs{x}\in\mathbb{R}^3$ as $\mathcal{H}^j(\bs{x})=\text{sign}\left(\bs{n}_{F^j}\cdot(\bs{x}-\bs{x}_0)\right)$, where $\bs{x}_0$ is a chosen fixed point on $F^j$.
Instead, if the fracture $F^j$ terminates inside the domain then the solution $p^{\D}$ is expected to be discontinuous across the fracture but continuous elsewhere.
We need to construct a function with this kind of behaviour.
Let us consider the polygonal shape of the fracture $F^j$, in particular taking into account each edge that lies in the interior of the domain $\D$. Let $n_{\mathring{e}}^j$ denote the number of these internal edges and let, for any $i=1,\ldots,n_{\mathring{e}}^j$, $\varepsilon^j_i$ be the equation of the line containing the $i-$th edge defined in the fracture-local reference system. 
Then we can define the function $\mathcal{E}^j:\mathbb{R}^3\to\mathbb{R}$ as
\begin{equation}
    \mathcal{E}^j(\bs{x})= \begin{cases}
        \zeta \prod_{i=1}^{n_{\mathring{e}}^j} \limits \left(\varepsilon^j_i(\mathcal{B}_{F^j}(\bs{x}))\right)^p &\mbox{if } \mathcal{B}_{F^j}(\bs{x})\in F^j\,,
        \\
        0 &\mbox{otherwise}\,,
    \end{cases}
\end{equation}
where $\zeta$ is a scaling parameter and $\mathcal{B}_{F^j}$ denotes the affine mapping from the reference system in $\D$ to the fracture $F^j$ reference system. 
The choice of the scaling parameter $\zeta$ is such that $\mathcal{E}^j$ is equal to one in the fracture barycenter and the choice of the exponent $p$ is done to control the shape of the enrichment function. 
In the numerical results proposed, it is equal to $\frac12$.
We remark that if the fracture $F^j$ entirely crosses the domain $\mathcal{E}^j$ results to be the unit constant function, whereas if $F^j$ is entirely contained in $\D$ then $\mathcal{E}^j$ results to be a bubble function on $F^j$.
Then, we observe that the function $\mathcal{H}^j\mathcal{E}^j$ can be chosen as enrichment function when $F^j$ terminates inside the domain because it results to be discontinuous across $F^j$ and continuous (equal to zero) elsewhere.

To give the definition of the global discrete space $V^{f,\D}_h$, 
let $\mathcal{T}^\D_{\mathcal{H}^j}$ denote the subset of the elements in $\mathcal{T}^\D$ that are entirely cut by the fracture $F^j$ and let $\mathcal{T}^\D_{\mathcal{E}^j}$ the subset of elements in $\mathcal{T}^\D$ that lie in $\D \setminus\partial \D$ and intersect $\partial F^j$.
Then, let $\bs{x_k}$ denote the $k$-th vertex of $\mathcal{T}^{\D}$, hence we get
\begin{multline}
    V^{f,\D}_h:= \Poly{1}{\mathcal{T}^{\D}} \cup \mbox{span}\left\{\mathfrak{E}^{j}_k\,, \forall \bs{x_k} \in \mathcal{T}^{\D}_{\mathcal{E}^j}\setminus \Gamma^f_D\,, \forall j=1,\ldots,\numFrac \right\}
    \\
    \cup \mbox{span}\left\{\mathfrak{H}^{j}_k\,, \forall \bs{x_k} \in \mathcal{T}^{\D}_{\mathcal{H}^j}\setminus \left(\mathcal{T}^{\D}_{\mathcal{E}^j}\cup\Gamma^f_D\right)\,, \forall j=1,\ldots,\numFrac\right\}
\end{multline}
where for any fracture $F^j$ and for any $\bs{x_k}$
\begin{equation}
    \mathfrak{E}^{j}_k (\bs{x}) =\phi_k(\bs{x})\left(\mathcal{H}^j(\bs{x})\mathcal{E}^j(\bs{x})-\mathcal{H}^j(\bs{x_k})\mathcal{E}^j(\bs{x_k})\right), \mbox{ with }\bs{x}\in\mathbb{R}^3
\end{equation}
and
\begin{equation}
    \mathfrak{H}^{j}_k  =\phi_k(\bs{x})\left(\mathcal{H}^j(\bs{x})-\mathcal{H}^j(\bs{x_k}))\right), \mbox{ with }\bs{x}\in\mathbb{R}^3
\end{equation}
and $\phi_k$ denotes the basis function of $\Poly{1}{\mathcal{T}^{\D}}$ lagrangian in the $k$-th vertex $\bs{x}_k$, i.e. $\phi_{\ell}(\bs{x}_k)=\delta_{lk}$ using the Kronecker delta.
For further details, we refer to the XFEM literature (\cite{XfemReview}).

Let $\bs{V}^m_h$ be the discretization space for the mechanical solution $\bs{u}$ defined on the 3D tessellation $\mathcal{T}^{\D}$. This space is a vectorial space, constructed following for each component the XFEM approach described above for the 3D flow discrete space and satisfying the zero Dirichlet conditions on $\mathring{\Gamma}^m_D$.
Finally, for each fracture $F^j$, we consider the standard linear FEM space defined on $\mathcal{T}^{F^j}$. 
This space is used both for the discretization of the 2D pressure on each fracture, enforcing the zero Dirichlet condition on $\gamma^j_D$ and, thus, defining the space $V_h^{f,F^j}$, and for the effective normal traction on each fracture, defining the space $\bs{Q}_h^{m,j}$. 
We remark that an alternative choice could be the space of piecewise constants defined on $\mathcal{T}^{F^j}$.

Hence the variational formulation discretized in space of Problem \ref{pb:contVar} results as follows.
\begin{problem}\label{pb:discreteVar}
Find $\bs{u}_h\in\lebl{0,T;\bs{V}_h^m}$, $p_h^\D\in\lebl{0,T;V_h^{f,\D}}$ and and for each $j=1,\ldots,\numFrac$ $p_h^{F^j}\in\lebl{0,T;V_h^{f,{F^j}}}$ and $\bs{\Lambda}_h^j\in\bs{Q}_h^{m,j}$ for a.e. $t\in (0,T)$ such that 
\begin{align}
\begin{aligned}
\int_{\Dint} \frac{1}{M} \frac{\partial p_h^\D}{\partial t}  q_h^\D  + \int_{\Dint} \alpha \div \left(\frac{\partial \bs{u}_h}{\partial t}\right)  q_h^\D + \int_{\Dint} K^{\D} \nabla p_h^\D \, \nabla q_h^{\D} + \sum_{j=1}^{\numFrac} \int_{\omega^j} \eta^j \left((p_h^{\D,j})_+\,(q_h^{\D,j})_+ + (p^{\D,j}_h)_-\,(q^{\D,j}_h)_-\right)
\\ -\sum_{j=1}^{\numFrac}\int_{\omega^j} \eta^j p_h^{F^j} \left((q_h^{\D,j})_+ + (q_h^{\D,j})_-\right)
= \int_{\Dint} g\, q_h^\D  \quad \forall \; q_h^\D\in V_h^{f,\D}, \mbox{ for a.e. } t\in (0,T)
\end{aligned}
\\
\begin{aligned}
\int_{\omega^j} K^{F^j} \nabla p_h^{F^j} \, \nabla q_h^{F^j} + \int_{\omega^j} 2 \eta^j\, p_h^{F^j} \, q_h^{F^j} - \int_{\omega^j} \eta^j \left((p_h^{\D,j})_+ + (p^{\D,j}_h)_-\right) q_h^{F^j} = 0  \quad \forall \; q_h^{F^j}\in V_h^{f,F^j}, 
\\ \mbox{ for a.e. } t\in (0,T) \mbox{ and 
} \forall j=1,\ldots,\numFrac
\end{aligned}
\\
\begin{aligned}
-\int_{\Dint} \alpha p_h^\D \div \bs{v}_h + \int_{\Dint} \bs{C} \bs{\epsilon}(\bs{u}_h) : \bs{\epsilon}(\bs{v}_h) 
+ \sum_{j=1}^{\numFrac} \int_{\omega^j}\left(\bs{\Lambda}_h^j-\alpha p_h^{F^j}\bs{n}^j\right)\left((\bs{v}_h^j)_- - (\bs{v}_h^j)_+\right)
= 0 \quad \forall \; \bs{v}_h\in \bs{V}_h^{m}, 
\\
\mbox{ for a.e. } t\in (0,T)
\end{aligned}
\\
\int_{\omega^j}\left((\bs{u}_h^j)_- - (\bs{u}_h^j)_+\right)\,\bs{\tau}_h^j =0 \quad \forall \; \bs{\tau}_h^j \in \bs{Q}_h^{m,j} , \mbox{ for a.e. } t\in (0,T)\; \mbox{and 
} \forall j=1,\ldots,\numFrac
\end{align}
\end{problem}
Starting from the previous formulation, we proceed to fully discretize the problem by applying the Backward Euler scheme for time integration, and we get the following result.
\begin{problem}\label{pb:backEulerDiscr}
Let $k\geq 1$ and assume $\bs{u}_h^{k-1}\in\bs{V}^m_h$, $(p_h^\D)^{k-1}\in V_h^{f,\D} $ 
be given.
Find $\bs{u}_h^{k}\in\bs{V}^m_h$, $(p_h^\D)^{k}\in V_h^{f,\D} $ and or each $j=1,\ldots,\numFrac$ $(p_h^{F^j})^{k}\in V_h^{f,F^j}$ and $(\bs{\Lambda}_h^j)^{k}\in\bs{Q}_h^{m,j}$ such that
\begin{align}
\begin{aligned} \label{eq:fullDiscr_flow3D}
\frac{1}{\Delta t}\left(\int_{\Dint} \frac{1}{M} (p_h^\D)^{k}  q_h^\D  + \int_{\Dint} \alpha \div \bs{u}_h^{k}  q_h^\D \right)
+ \int_{\Dint} K^{\D} \nabla (p_h^\D)^k \, \nabla q_h^{\D} + \sum_{j=1}^{\numFrac}\int_{\omega^j} \eta^j \left((p_h^{\D,j})^k_+\,(q_h^{\D,j})_+ + (p^{\D,j}_h)^k_-\,(q^{\D,j}_h)_-\right)
\\ -  \sum_{j=1}^{\numFrac}\int_{\omega^j} \eta^j (p_h^{F^j})^k \left((q_h^{\D,j})_+ + (q_h^{\D,j})_-\right)
= \frac{1}{\Delta t}\left(\int_{\Dint} \frac{1}{M} (p_h^\D)^{k-1}  q_h^\D + \int_{\Dint} \alpha \div \bs{u}_h^{k-1}  q_h^\D\right)+ \int_{\Dint} g\, q_h^\D  \quad \forall \; q_h^\D\in V_h^{f,\D}
\end{aligned}
\\
\begin{aligned}\label{eq:fullDiscr_flow2D}
    \int_{\omega^j} K^{F^j} \nabla (p_h^{F^j})^k \, \nabla q_h^{F^j} + \int_{\omega^j} 2 \eta^j\, (p_h^{F^j})^k \, q_h^{F^j} - \int_{\omega^j} \eta^j \left((p_h^{\D,j})^k_+ + (p^{\D,j}_h)^k_-\right) q_h^{F^j} = 0  \quad \forall \; q_h^{F^j}\in V_h^{f,F^j}
\\ \mbox{and 
} \forall j=1,\ldots,\numFrac
\end{aligned}
\\ \label{eq:fullDiscr_mech}
\begin{aligned}
-\int_{\Dint} \alpha (p_h^\D)^k \div \bs{v}_h + \int_{\Dint} \bs{C} \bs{\epsilon}(\bs{u}^k_h) : \bs{\epsilon}(\bs{v}_h) 
+ \sum_{j=1}^{\numFrac} \int_{\omega^j}\left((\bs{\Lambda}_h^j)^k-\alpha (p_h^{F^j})^k\bs{n}^j\right)\left((\bs{v}_h^j)_- - (\bs{v}_h^j)_+\right)
= 0 \quad \forall \; \bs{v}_h\in \bs{V}_h^{m}, 
\end{aligned}
\\
\int_{\omega^j}\left((\bs{u}_h^j)^k_- - (\bs{u}_h^j)^k_+\right)\,\bs{\tau}_h^j =0 \quad \forall \; \bs{\tau}_h^j \in \bs{Q}_h^{m,j} ,  \mbox{and 
} \forall j=1,\ldots,\numFrac
\end{align}
\end{problem}
After the discretization obtained, we choose to apply the fixed-stress splitting scheme instead of a fully monolithic approach. 
It is well-known that,  particularly in the presence of strong coupling between fluid flow and mechanical deformation, the monolithical approach, although accurate, often suffer from high computational cost. 
By decoupling the mechanics and flow equations, the fixed-stress scheme (\cite{KIM2011CMAME,SETTARI1998_FixedStress}) allows for the choice of optimized methods and solvers for each subproblem.
The fixed-stress splitting scheme is an iterative splitting scheme that requires to solve first the flow subproblem \eqref{eq:fullDiscr_flow3D} and \eqref{eq:fullDiscr_flow2D} using the displacement from the previous iteration, and then one solves the mechanics subproblem \eqref{eq:fullDiscr_mech} with the updated pressure and iterates until convergence. 
To ensure the convergence, we need to stabilize the flow equation \eqref{eq:fullDiscr_flow3D} by adding the term $L((p_h^\D)^{k,i}-(p_h^\D)^{k,i})$, where $i$ denotes the iteration index and $L$ is a stability parameter. 
We follow the approach of choosing $L$ with a physical motivation (\cite{KIM2011CMAME}), hence $L=\frac{\alpha^2}{K_{dr}}$, where $K_{dr}$ represents the physical drained bulk modulus, i.e. $K_{dr} = \lambda + \frac{2}{3}\mu$.
This stabilization results fixing the volumetric stress, i.e. imposing that
\begin{equation}
    K_{dr}\div\bs{u}_h^{k,i} =  K_{dr}\div\bs{u}_h^{k,i-1} +\alpha\left((p^\D_h)^{k,i}-(p^\D_h)^{k,i-1}\right)\,.
\end{equation}
We recall that $K_{dr}$ satisfies the inequality
\begin{equation}\label{eq:relationDrainedBulk}
    2\mu\norm[0,\D]{\bs{\epsilon}(\bs{v}_h)}^2+\lambda\norm[0,\D]{\div\bs{v}_h}^2 \geq K_{dr} \norm[0,\D]{\div\bs{v}_h}^2 \quad \forall \bs{v}_h\in\bs{V}^m_h\,.
\end{equation}

Now, let us apply this scheme to Problem \ref{pb:backEulerDiscr}. Let $k\geq 1$ and assume $\bs{u}_h^{k-1}\in\bs{V}^m_h$, $(p_h^\D)^{k-1}\in V_h^{f,\D} $ and $(p_h^{F^j})^{k-1}\in V_h^{f,F^j}$ for any $j =1,\ldots,\numFrac$ be given. 
And let us choose the solution at the last time step as the initial guess for the iteration, i.e. for any $k$ $\bs{u}_h^{k,0}=\bs{u}_h^{k-1}$, $(p_h^\D)^{k,0}=(p_h^\D)^{k-1}$ and $(p_h^{F^j})^{k,0}=(p_h^{F^j})^{k-1}$ for any $j =1,\ldots,\numFrac$.
\begin{problem}\label{pb:FixedStress-dec}
    Let $i\geq 1$ be the iteration index and assume $\bs{u}_h^{k-1}$, $(p_h^\D)^{k-1} $, $(p_h^{F^j})^{k-1}$, and $\bs{u}_h^{k,i-1}$, $(p_h^\D)^{k,i-1} $, $(p_h^{F^j})^{k,i-1}$ $\forall\, j=1,\ldots,\numFrac$ be given.
    Find  $(p_h^\D)^{k,i}\in V_h^{f,\D} $ and for each $j=1,\ldots,\numFrac$ $(p_h^{F^j})^{k,i}\in V_h^{f,{F^j}}$ such that
    \begin{align}\label{eq:flow3d_fs}
        \begin{aligned}
\frac{1}{\Delta t}\int_{\Dint} \left(\frac{1}{M} + \frac{\alpha^2}{K_{dr}} \right)(p_h^\D)^{k,i}  q_h^\D  
+ \int_{\Dint} K^{\D} \nabla (p_h^\D)^{k,i} \, \nabla q_h^{\D} +  \sum_{j=1}^{\numFrac}\int_{\omega^j} \eta^j \left((p_h^{\D,j})^{k,i}_+\,(q_h^{\D,j})_+ + (p^{\D,j}_h)^{k,i}_-\,(q^{\D,j}_h)_-\right)\\
 -  \sum_{j=1}^{\numFrac}\int_{\omega^j} \eta^j (p_h^{F^j})^{k,i} \left((q_h^{\D,j})_+ + (q_h^{\D,j})_-\right)
= \frac{1}{\Delta t}\left(\int_{\Dint} \frac{1}{M} (p_h^\D)^{k-1}  q_h^\D + \int_{\Dint} \alpha \div \bs{u}_h^{k-1}  q_h^\D\right)+ \int_{\Dint} g\, q_h^\D  
\\
- \frac{1}{\Delta t}\int_{\Dint} \alpha \div \bs{u}_h^{k,i-1}  q_h^\D 
+\frac{1}{\Delta t}\int_{\Dint}\frac{\alpha^2}{K_{dr}}(p_h^\D)^{k,i-1}  q_h^\D 
\quad \forall \; q_h^\D\in V_h^{f,\D} 
\end{aligned}
\\
\label{eq:flow2d_fs}
\begin{aligned}
    \int_{\omega^j} K^{F^j} \nabla (p_h^{F^j})^{k,i} \, \nabla q_h^{F^j} + \int_{\omega^j} 2 \eta^j\, (p_h^{F^j})^{k,i} \, q_h^{F^j} - \int_{\omega^j} \eta^j \left((p_h^{\D,j})^{k,i}_+ + (p^{\D,j}_h)^{k,i}_-\right) q_h^{F^j} = 0  \quad \forall \; q_h^{F^j}\in V_h^{f,F^j} 
    \\ \mbox{and 
} \forall j=1,\ldots,\numFrac
\end{aligned}
    \end{align}
    Then, find $\bs{u}_h^{k,i}\in\bs{V}^m_h$ and for each $j=1,\ldots,\numFrac$ $(\bs{\Lambda}_h^j)^{k,i}\in \bs{Q}_h^{m,j}$ such that
    \begin{align}
\begin{aligned}
 \int_{\Dint} \bs{C} \bs{\epsilon}(\bs{u}^{k,i}_h) : \bs{\epsilon}(\bs{v}_h) 
 &+ \sum_{j=1}^{\numFrac} \int_{\omega^j} (\bs{\Lambda}_h^j)^{k,i}\left((\bs{v}_h^j)_- - (\bs{v}_h^j)_+\right)
 = \int_{\Dint} \alpha (p_h^\D)^{k,i} \div \bs{v}_h \\
 &+
 \sum_{j=1}^{\numFrac} \int_{\omega^j}\left(\alpha (p_h^{F^j})^{k,i}\bs{n}^j\right)\left((\bs{v}_h^j)_- - (\bs{v}_h^j)_+\right)
\quad \forall \; \bs{v}_h\in \bs{V}_h^{m}, 
\end{aligned}
\\
\int_{\omega^j}\left((\bs{u}_h^j)^{k,i}_- - (\bs{u}_h^j)^{k,i}_+\right)\,\bs{\tau}_h^j =0 \quad \forall \; \bs{\tau}_h^j \in \bs{Q}_h^{m,j} ,  \mbox{and 
} \forall j=1,\ldots,\numFrac
\end{align}
\end{problem}
Now, let us prove the convergence of the fixed-stress splitting scheme for the coupled flow and mechanics defined above, we extend to our model problem the techniques of \cite{Both2017,Storvik2019}.
First, let us define the following error measures:
\begin{equation}\label{def:errorMeasures}
    \begin{aligned}
        \epD&= (p_h^\D)^{k,i}-(p_h^\D)^{k},\;\; 
        \epF= (p_h^{F^j})^{k,i}-(p_h^{F^j})^{k} \; \forall j=1,\ldots,\numFrac,\;\; 
        \\
        \eU  &= \bs{u}_h^{k,i}-\bs{u}_h^{k},\;\;
        \eLambda = (\bs{\Lambda}_h^j)^{k,i}-(\bs{\Lambda}_h^j)^{k}
    \end{aligned}
\end{equation}

\begin{theorem}
Under Assumptions \ref{assumptionCoeff_omega} and \ref{assumptionCoeff_frac}, let $(p_h^\D)^{k}$, $\bs{u}_h^{k}$, $(\bs{\Lambda}_h^j)^{k}$ and $(p_h^{F^j})^{k}$ $\forall j=1,\ldots,\numFrac$ be the solutions of Problem \ref{pb:backEulerDiscr}. 
Moreover, let $(p_h^\D)^{k,i}$, $\bs{u}_h^{k,i}$, $(\bs{\Lambda}_h^j)^{k,i}$ and $(p_h^{F^j})^{k,i}$ $\forall j=1,\ldots,\numFrac$ be the solutions of Problem \ref{pb:FixedStress-dec} and we consider the error measures defined in \eqref{def:errorMeasures}.
Then we get the following estimates for a given $\delta_{F}\leq 2$
    \begin{equation}\label{estim:elast}
        2\mu \norm[0,\Dint]{\bs{\epsilon}(\eU)}^2+\lambda \norm[0,\Dint]{\div\eU}^2 \leq  \frac{\alpha^2}{K_{dr}} \norm[0,\Dint]{\epD}^2 \,,
\end{equation}
\begin{equation}\label{estim:pFrac}
    K^{F^j} \norm[0,\omega^j]{\nabla \epF}^2  + 2 \eta^j\norm[0,\omega^j]{\epF} 
   \leq \frac{\eta^j}{\delta_F} \norm[0,\omega^j]{(\epD)^j}^2\,, \quad \forall j=1,\ldots,\numFrac \,,
\end{equation}
 \begin{equation}\label{estim:contraction}
     \left( \frac{1}{M} +\frac{\alpha^2}{2K_{dr}} + \frac{\Delta t K^{\D}}{C_P} \right) \norm[0,\Dint]{\epD}^2  
   \leq \frac{\alpha^2}{2K_{dr}}\norm[0,\Dint]{\epDprec}^2 \,,
 \end{equation}
 that provide the convergence of the fixed-stress splitting scheme.
\end{theorem}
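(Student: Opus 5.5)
We follow the energy technique of \cite{Both2017,Storvik2019}. The first step is to subtract the equations of Problem \ref{pb:backEulerDiscr} from those of Problem \ref{pb:FixedStress-dec}, which gives error equations for $\epD$, $\epF$, $\eU$, $\eLambda$. The 3D flow error equation reads
\[
\frac{1}{\Delta t}\int_{\Dint}\Big(\frac{1}{M}+\frac{\alpha^2}{K_{dr}}\Big)\epD q_h^\D+\int_{\Dint}K^\D\nabla\epD\nabla q_h^\D+\sum_j\int_{\omega^j}\eta^j\big((\epD)^j_+(q^{\D,j}_h)_+ +(\epD)^j_-(q^{\D,j}_h)_-\big)-\sum_j\int_{\omega^j}\eta^j\epF\big((q^{\D,j}_h)_++(q^{\D,j}_h)_-\big)
\]
\[
=\frac{1}{\Delta t}\int_{\Dint}\Big(\frac{\alpha^2}{K_{dr}}\epDprec-\alpha\div\eUprec\Big)q_h^\D .
\]
The 2D error equation and the mechanics error equations are obtained in the same way, with right-hand sides involving $\epD$ and $\epF$. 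The error constraint is $\int_{\omega^j}((\eU)^j_--(\eU)^j_+)\bs\tau_h^j=0$ for all $\bs{\tau}_h^j \in \bs{Q}_h^{m,j}$.

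For \eqref{estim:elast}, test the mechanics error equation with $\bs v_h=\eU$.
\begin{itemize}
\item The multiplier term vanishes by the error constraint, taking $\bs\tau_h^j=\eLambda\in\bs Q_h^{m,j}$.
\item The fracture-pressure term $\alpha\epF\bs n^j\cdot((\eU)^j_--(\eU)^j_+)$ is the delicate point. It vanishes if $\epF\bs n^j\in\bs Q_h^{m,j}$. This holds because $\bs Q^{m,j}_h$ is the vector P1 space on $\mathcal T^{F^j}$, $V^{f,F^j}_h$ is contained in the scalar P1 space on the same mesh, and $\bs n^j$ is constant on a planar fracture.
\end{itemize}
We are left with $\int\bs C\bs\epsilon(\eU):\bs\epsilon(\eU)=\int\alpha\epD\div\eU$. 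Bound the right side by Cauchy--Schwarz and Young as $\frac{\alpha^2}{2K_{dr}}\|\epD\|^2+\frac{K_{dr}}{2}\|\div\eU\|^2$. Then use \eqref{eq:relationDrainedBulk} to absorb the last term into the left side. This gives a bound of the form $2\mu\|\bs\epsilon(\eU)\|^2+\lambda\|\div\eU\|^2\le\frac{\alpha^2}{K_{dr}}\|\epD\|^2$ after rearranging constants. We also record the byproduct $K_{dr}\|\div\eU\|\le\alpha\|\epD\|$, to be used below with index $i-1$.

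For \eqref{estim:pFrac}, test the 2D error equation with $q_h^{F^j}=\epF$. This gives
\[
K^{F^j}\|\nabla\epF\|^2+2\eta^j\|\epF\|^2=\int\eta^j\big((\epD)^j_++(\epD)^j_-\big)\epF .
\]
Apply Young with parameter $\delta_F$, so that the $\epF$ part is absorbed while keeping a positive coefficient; this needs $\delta_F\le2$. The fracture contribution is then controlled by $\frac{\eta^j}{\delta_F}\|(\epD)^j\|^2$, where $\|(\epD)^j\|^2$ is understood as the sum of the two side traces.

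For \eqref{estim:contraction}, test the 3D error equation with $q_h^\D=\epD$ and multiply by $\Delta t$.
\begin{itemize}
\item The interface terms combine into $\sum_j\eta^j\int_{\omega^j}\big((\epD)_+^2+(\epD)_-^2-\epF((\epD)_++(\epD)_-)\big)$. Adding the 2D identity tested with $\epF$ turns this into $\sum_j\eta^j\|(\epD)_+-\epF\|^2+\eta^j\|(\epD)_--\epF\|^2+K^{F^j}\|\nabla\epF\|^2\ge0$, so it can be dropped.
\item The right-hand side is $\int\big(\frac{\alpha^2}{K_{dr}}\epDprec-\alpha\div\eUprec\big)\epD$. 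Following \cite{Both2017}, test the mechanics error equation at iteration $i-1$ with $\bs v_h$ chosen so that the combination $\frac{\alpha}{K_{dr}}(\alpha\epDprec-K_{dr}\div\eUprec)$ is controlled in norm by $\frac{\alpha^2}{K_{dr}}\|\epDprec\|$. Alternatively, use the byproduct above together with the triangle inequality; the sharper route uses $\|\alpha\epDprec-K_{dr}\div\eUprec\|^2\le\alpha^2\|\epDprec\|^2$, derived from the tested mechanics identity.
\item Young's inequality then yields $\frac{\alpha^2}{2K_{dr}}\|\epD\|^2+\frac{\alpha^2}{2K_{dr}}\|\epDprec\|^2$.
\item Finally, Poincaré's inequality $\|\epD\|^2\le C_P\|\nabla\epD\|^2$ on the XFEM space gives the $\Delta tK^\D/C_P$ term. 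This is valid elementwise on subdomains touching $\Gamma^f_D$, or with the interface terms otherwise.
\end{itemize}

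The main obstacle is this last step: controlling $\alpha\div\eUprec$ sharply enough to obtain the contraction factor $\frac{\alpha^2}{2K_{dr}}$ rather than a cruder constant. The vanishing of the fracture terms in the mechanics energy identity also needs the compatibility $V^{f,F^j}_h\bs n^j\subset\bs Q^{m,j}_h$.
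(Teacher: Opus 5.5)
Your argument for \eqref{estim:elast} and \eqref{estim:contraction} is correct. For the contraction it takes a different and shorter route than the paper.

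\textbf{The paper's route.} It sums three tested equations: the mechanics identity tested with $\eU$, the 3D flow error tested with $\epD$, and $\Delta t$ times the 2D error tested with $\epF$. It then splits the stabilization term by the polarization identity. Next it rewrites $\int_{\Dint}\alpha\,\div(\eU-\eUprec)\,\epD$ through the mechanics equation tested with the increment $\eU-\eUprec$, and applies Young's inequality with parameters $\delta_U,\delta_P\in[1,2]$. Finally it bounds the elastic energy of the increment by $\frac{\alpha^2}{K_{dr}}\|\epD-\epDprec\|^2$, using the difference of two consecutive mechanics error equations, so that this cancels the polarization remainder.

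\textbf{Your route.} You keep the stabilization on the left and bound the lagged term directly. The inequality you need follows by expanding the square and using $\alpha\int_{\Dint}\epDprec\,\div\eUprec=\int_{\Dint}\bs{C}\bs{\epsilon}(\eUprec):\bs{\epsilon}(\eUprec)\geq K_{dr}\|\div\eUprec\|^2$, which gives
\[
\|\alpha\epDprec-K_{dr}\div\eUprec\|^2\leq\alpha^2\|\epDprec\|^2-K_{dr}^2\|\div\eUprec\|^2 .
\]
One Young inequality then yields exactly the paper's constants. The ``triangle inequality'' alternative you mention would not give a contraction in general and should be dropped. Completing the square for the interface terms is exact and replaces the paper's Young step with $\delta_P$.

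\textbf{What each buys.} The paper's route, modelled on \cite{Both2017}, keeps an extra multiple $(1-\frac{\delta_U}{2})$ of the elastic energy of $\eU$ in the final inequality. Yours is parameter-free and needs no increment estimate.

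\textbf{Fracture terms.} Your handling of the fracture terms in the mechanics identity is sounder than the paper's. The paper tests the constraint with the jump $(\eU)^j_- - (\eU)^j_+$. That jump is the trace of an XFEM function on a 3D mesh independent of $\mathcal{T}^{F^j}$, and in general it is not in $\bs{Q}_h^{m,j}$. Your test function $\bs{\tau}_h^j=\eLambda-\alpha\epF\bs{n}^j$ is admissible under the choice of spaces in Section~\ref{sec:discrform}; you need this at iteration $i-1$ as well. Your caveat on the Poincar\'e inequality for the broken XFEM space is well taken. If you want to use the interface terms there, keep them rather than dropping them.

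\textbf{The estimate \eqref{estim:pFrac}.} Here neither your argument nor the paper's delivers the statement as written. Absorbing $\epF$ with Young's inequality leaves only a reduced coefficient, such as $(2-\delta_F)\eta^j$, in front of $\|\epF\|^2$, not $2\eta^j$.

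The stated form does hold for $\delta_F=1$, hence for any $\delta_F\leq 1$, by Cauchy--Schwarz. First, $2\eta^j\|\epF\|^2\leq\eta^j\|(\epD)^j_++(\epD)^j_-\|\,\|\epF\|$ gives $\|\epF\|\leq\frac12\|(\epD)^j_++(\epD)^j_-\|$. So the left-hand side is at most $\frac{\eta^j}{2}\|(\epD)^j_++(\epD)^j_-\|^2\leq\eta^j\big(\|(\epD)^j_+\|^2+\|(\epD)^j_-\|^2\big)$.

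For $\delta_F\in(1,2]$ the estimate cannot follow from the 2D error equation. Take $K^{F^j}=0$ and both traces equal to some $c\in V_h^{f,F^j}$. Then $\epF=c$, the left side equals $2\eta^j\|c\|^2$, and the right side equals $2\eta^j\|c\|^2/\delta_F$, which is smaller. This estimate is not used in the contraction argument, so the convergence result itself is unaffected.
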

\begin{proof}
Subtracting the equations of Problem \ref{pb:FixedStress-dec} from the corresponding equations of Problem \ref{pb:backEulerDiscr}, we obtain the following error equations
\begin{align} \label{eq:errorpD}
 &   \begin{aligned}
&\int_{\Dint} \frac{1}{M} \epD q_h^\D  
+ \int_{\Dint} \frac{\alpha^2}{K_{dr}} \left( \epD -\epDprec \right)q_h^\D
+ \Delta t\int_{\Dint} K^{\D} \nabla \epD \, \nabla q_h^{\D} 
+ \int_{\Dint} \alpha \div \eUprec  q_h^\D 
\\
&+  \Delta t\sum_{j=1}^{\numFrac}\int_{\omega^j} \eta^j \left((\epD)_+\,(q_h^{\D,j})_+ + (\epD)_-\,(q^{\D,j}_h)_-\right) 
 -  \Delta t \int_{\omega^j} \eta^j \epF \left((q_h^{\D,j})_+ + (q_h^{\D,j})_-\right)
 =0
\quad \forall \; q_h^\D\in V_h^{f,\D} 
\end{aligned}
\\ \label{eq:errorpF}
&\begin{aligned}
    \int_{\omega^j} K^{F^j} \nabla \epF \, \nabla q_h^{F^j} + \int_{\omega^j} 2 \eta^j\, \epF \, q_h^{F^j} - \int_{\omega^j} \eta^j \left((\epD)^j_+ + (\epD)^j_-\right) q_h^{F^j} = 0  \quad \forall \; q_h^{F^j}\in V_h^{f,F^j} 
    \\ \mbox{and 
} \forall j=1,\ldots,\numFrac
\end{aligned}
\\ \label{eq:errorU}
& \int_{\Dint} \bs{C} \bs{\epsilon}(\eU) : \bs{\epsilon}(\bs{v}_h)-\int_{\Dint} \alpha \epD \div \bs{v}_h
 + \sum_{j=1}^{\numFrac} \int_{\omega^j} \left[\eLambda-\alpha \epF\bs{n}^j\right]\left((\bs{v}_h^j)_- - (\bs{v}_h^j)_+\right)
 = 0 
\quad \forall \; \bs{v}_h\in \bs{V}_h^{m}, 
\\ \label{eq:errorLambda}
&\int_{\omega^j}\left((\eU)^j_- - (\eU)^j_+\right)\,\bs{\tau}_h^j =0 \quad \forall \; \bs{\tau}_h^j \in \bs{Q}_h^{m,j} ,  \mbox{and 
} \forall j=1,\ldots,\numFrac
\end{align}
We notice that considering \eqref{eq:errorLambda} with 
$\bs{\tau}_h^j=(\eU)^j_- - (\eU)^j_+$, we get for all $j=1,\ldots,\numFrac$
\begin{equation}\label{eq:errorLambdaWithTest}
    \begin{aligned}
        \int_{\omega^j}\left((\eU)^j_- - (\eU)^j_+\right)^2 = 0
        \quad \mbox{that implies}\;
        (\eU)^j_- - (\eU)^j_+ = 0 \; \mbox{ on } \omega^j
        \,.
    \end{aligned}
\end{equation}
Now, considering \eqref{eq:errorU} with $\bs{v}_h=\eU$ together with \eqref{eq:errorLambdaWithTest} and the constitutive relation of linear elasticity \eqref{eq:ElasticityConstitutiveRel}, we derive
\begin{equation}\label{eq:erroreUwitheU}
    2\mu \norm[0,\Dint]{\bs{\epsilon}(\eU)}^2+\lambda \norm[0,\Dint]{\div\eU}^2
    -\int_{\Dint} \alpha \epD \div \eU
 = 0 \,.
\end{equation}
Then, applying the Young's inequality, i.e. given $a,b,\delta\in\mathbb{R}$ we have $ab\leq\frac{a^2}{2\delta}+\frac{\delta b^2}{2}$, with $\delta=K_{dr}$ 
, we obtain
\begin{equation}
    \begin{aligned}
        2\mu \norm[0,\Dint]{\bs{\epsilon}(\eU)}^2+\lambda \norm[0,\Dint]{\div\eU}^2 &\leq \frac{\alpha^2}{2K_{dr}} \norm[0,\Dint]{\epD}^2+\frac{K_{dr}}{2} \norm[0,\Dint]{\div\eU} 
        \\
        &\leq \frac{\alpha^2}{K_{dr}} \norm[0,\Dint]{\epD}^2 
    \end{aligned}
\end{equation}
where in the last step we consider \eqref{eq:relationDrainedBulk} with $\bs{v}_h=\eU$, hence obtaining \eqref{estim:elast}.
Now, let us consider \eqref{eq:errorpF} with $q_h^{F^j}=\epF$, $\forall j=1,\ldots,\numFrac$
\begin{equation}\label{eq:errorpFwithpF}
K^{F^j} \norm[0,\omega^j]{\nabla \epF}^2  + 2 \eta^j\norm[0,\omega^j]{\epF} - \int_{\omega^j} \eta^j \left((\epD)^j_+ + (\epD)^j_-\right) \epF = 0 \,,
\end{equation}
applying the Young’s inequality with $\delta= \delta_{F}$, reminding that $\delta_{F} \leq 2$ we get $\forall j=1,\ldots,\numFrac$
\begin{equation}
\begin{aligned}
    K^{F^j} \norm[0,\omega^j]{\nabla \epF}^2  + 2 \eta^j\norm[0,\omega^j]{\epF} 
    &\leq  \frac{\eta^j}{2\delta_F} \norm[0,\omega^j]{(\epD)_\pm^j}^2 + \frac{\eta^j \delta_F}{2} \norm[0,\omega^j]{\epF}
       \\
     &  \leq \frac{\eta^j}{\delta_F} \norm[0,\omega^j]{(\epD)_\pm^j}^2\,,
\end{aligned}
\end{equation}
where $\norm[0,\omega^j]{(\epD)_\pm^j}^2:=\int_{\omega^j} \left([(\epD)^j_+]^2 + [(\epD)^j_-]^2\right)$\,.
Hence we get \eqref{estim:pFrac}.

Furthermore, let us consider \eqref{eq:errorpD} with $q_h^\D=\epD$
\begin{equation}\label{eq:errorpDwithpD}
    \begin{aligned}
& \frac{1}{M} \norm[0,\Dint]{\epD}^2  
+ \int_{\Dint} \frac{\alpha^2}{K_{dr}} \left( \epD -\epDprec \right)\epD
+ \Delta t K^{\D} \norm[0,\Dint]{\nabla \epD}^2 
+ \int_{\Dint} \alpha \div \eUprec  \epD 
\\
&+  \Delta t\sum_{j=1}^{\numFrac}\int_{\omega^j} \eta^j \left([(\epD)^j_+]^2 
+ [(\epD)^j_-]^2\right) 
 -  \Delta t\int_{\omega^j} \eta^j \epF \left((\epD)^j_+ + (\epD)^j_-\right)
 =0\,.
\end{aligned}
\end{equation}
Then, summing together \eqref{eq:erroreUwitheU}, \eqref{eq:errorpDwithpD}, \eqref{eq:errorpFwithpF} $\forall j=1,\ldots,\numFrac$ multiplied by $\Delta t$, and applying the relation 
\begin{equation*}
\int_{\Dint}\frac{\alpha^2}{K_{dr}}  \left( \epD -\epDprec \right)\epD=\frac{\alpha^2}{2K_{dr}} \left(\norm[0,\Dint]{\epD -\epDprec}^2 +\norm[0,\Dint]{\epD}^2-\norm[0,\Dint]{\epDprec}^2\right)
\end{equation*}
 we get
\begin{equation}\label{eq:3equations-s1}
\begin{aligned}
&
    2\mu \norm[0,\Dint]{\bs{\epsilon}(\eU)}^2+\lambda \norm[0,\Dint]{\div\eU}^2
   + \left( \frac{1}{M} +\frac{\alpha^2}{2K_{dr}} \right) \norm[0,\Dint]{\epD}^2  + \Delta t K^{\D} \norm[0,\Dint]{\nabla \epD}^2
   +  \Delta t\sum_{j=1}^{\numFrac} \eta^j \norm[0,\omega^j]{(\epD)_\pm^j}^2
   \\
   &
+  \frac{\alpha^2}{2K_{dr}} \left( \norm[0,\Dint]{\epD -\epDprec}^2 -\norm[0,\Dint]{\epDprec}^2 \right)
- \int_{\Dint} \alpha \div\left(\eU- \eUprec\right)  \epD 
\\
&
+ \Delta t\sum_{j=1}^{\numFrac}K^{F^j} \norm[0,\omega^j]{\nabla \epF}^2  + 2 \eta^j\Delta t\norm[0,\omega^j]{\epF}-  \int_{\omega^j} 2\Delta t\,\eta^j \epF \left((\epD)^j_+ + (\epD)^j_-\right)
 = 0\,.
 \end{aligned}
\end{equation}
 
 Then, we manage the term $\int_{\Dint} \alpha \div\left(\eU- \eUprec\right)  \epD$. 
 We consider \eqref{eq:errorU} with $\bs{v}_h=\eU- \eUprec$ together with \eqref{eq:errorLambdaWithTest} (both at the $i$-th iteration and the $i-1$-th iteration) and we get
 \begin{equation}
 \begin{aligned}
 \int_{\Dint} \alpha \epD \div (\eU- \eUprec)
&=     \int_{\Dint} \bs{C} \bs{\epsilon}(\eU) : \bs{\epsilon}(\eU- \eUprec)
 - \sum_{j=1}^{\numFrac} \int_{\omega^j} \alpha \epF\bs{n}^j\left((\eU- \eUprec)^j_- - (\eU- \eUprec)^j_+\right)\,,
 \\
 &
 \begin{aligned}
  =   \int_{\Dint} 2\mu \bs{\epsilon}(\eU) : \bs{\epsilon}(\eU- \eUprec) + 
 \int_{\Dint} \lambda \div \eU \,\div (\eU- \eUprec)
  \,,
 \end{aligned}
 \end{aligned}
 \end{equation}
 that we substitute in \eqref{eq:3equations-s1} obtaining
 \begin{equation}\label{eq:3equations-s2}
     \begin{aligned}
&
    2\mu \norm[0,\Dint]{\bs{\epsilon}(\eU)}^2+\lambda \norm[0,\Dint]{\div\eU}^2
   + \left( \frac{1}{M} +\frac{\alpha^2}{2K_{dr}} \right) \norm[0,\Dint]{\epD}^2  + \Delta t K^{\D} \norm[0,\Dint]{\nabla \epD}^2
   +  \Delta t\sum_{j=1}^{\numFrac} \eta^j \norm[0,\omega^j]{(\epD)_\pm^j}^2
   \\
   &
+  \frac{\alpha^2}{2K_{dr}} \left( \norm[0,\Dint]{\epD -\epDprec}^2 -\norm[0,\Dint]{\epDprec}^2 \right)
-\int_{\Dint} 2\mu \bs{\epsilon}(\eU) : \bs{\epsilon}(\eU- \eUprec) 
 - 
 \int_{\Dint} \lambda \div \eU \,\div (\eU- \eUprec)
\\
&
+ \Delta t\sum_{j=1}^{\numFrac}K^{F^j}\norm[0,\omega^j]{\nabla \epF}^2  + 2 \eta^j\Delta t\norm[0,\omega^j]{\epF}-  \int_{\omega^j} 2\Delta t\,\eta^j \epF \left((\epD)^j_+ + (\epD)^j_-\right)
 = 0\,.
 \end{aligned}
 \end{equation}
 Then, we move all the negative terms to the right-hand side and bound the terms $\int_{\Dint} 2\mu \bs{\epsilon}(\eU) : \bs{\epsilon}(\eU- \eUprec) 
+
 \int_{\Dint} \lambda \div \eU \,\div (\eU- \eUprec)$  and $\int_{\omega^j} 2\Delta t\eta^j \epF \left((\epD)^j_+ + (\epD)^j_-\right)$ by the Young’s inequality with $\delta = \delta_U\in[1,2]$ and  $\delta = \delta_P\in [1,2]$, respectively.  
Hence, we get
 \begin{equation}\label{eq:3equations-s3}
     \begin{aligned}
&
    2\mu \norm[0,\Dint]{\bs{\epsilon}(\eU)}^2+\lambda \norm[0,\Dint]{\div\eU}^2
   + \left( \frac{1}{M} +\frac{\alpha^2}{2K_{dr}} \right) \norm[0,\Dint]{\epD}^2  + \Delta t K^{\D} \norm[0,\Dint]{\nabla \epD}^2
   +  \Delta t\sum_{j=1}^{\numFrac} \eta^j \norm[0,\omega^j]{(\epD)_\pm^j}^2
   \\
   &
+  \frac{\alpha^2}{2K_{dr}} \norm[0,\Dint]{\epD -\epDprec}^2  + \Delta t\sum_{j=1}^{\numFrac}K^{F^j}\norm[0,\omega^j]{\nabla \epF}^2  + 2 \eta^j\Delta t\norm[0,\omega^j]{\epF}
\\
&\leq \frac{\alpha^2}{2K_{dr}}\norm[0,\Dint]{\epDprec}^2 
+
\frac{\delta_U}{2}\left( 2\mu \norm[0,\Dint]{\bs{\epsilon}(\eU)}^2+\lambda\norm[0,\Dint]{\bs{\epsilon}(\eU)}\right)
\\
 &
+\frac{1}{2\delta_U}\left( 2\mu \norm[0,\Dint]{\bs{\epsilon}(\eU- \eUprec) }^2+\lambda\norm[0,\Dint]{\bs{\epsilon}(\eU- \eUprec) }\right)
+ \sum_{j=1}^{\numFrac} \frac{\Delta t\,\eta^j}{\delta_P} \norm[0,\omega^j]{(\epD)_\pm^j}^2
+
\Delta t\,\eta^j\delta_P 
\norm[0,\omega^j]{\epF}^2\,.
 \end{aligned}
 \end{equation}
 To take care of the term involving $\eU-\eUprec$, we consider \eqref{eq:errorU} and  \eqref{eq:errorLambdaWithTest}, subtracting iteration $i-1$ from iteration $i$, then we set the test function $\bs{v}_h=\eU-\eUprec$ and we obtain
 \begin{equation}
 \begin{aligned}
     2\mu\norm[0,\Dint]{\bs{\epsilon}(\eU-\eUprec)}^2 + \lambda\norm[0,\Dint]{\div(\eU-\eUprec)}^2 =\int_{\Dint} \alpha (\epD-\epDprec) \div (\eU-\eUprec)
 \,.
 \end{aligned}
 \end{equation}
 Then, we apply the Cauchy-Schwarz inequality to the right hand-side, together with \eqref{eq:relationDrainedBulk} with $\bs{v}_h=\eU-\eUprec$, 
and we obtain
 \begin{multline}\label{eq:estimOfEUmEUPREC}
         2\mu\norm[0,\Dint]{\bs{\epsilon}(\eU-\eUprec)}^2 + \lambda\norm[0,\Dint]{\div(\eU-\eUprec)}^2   \\
         \leq \alpha \norm[0,\Dint]{\epD-\epDprec}\norm[0,\Dint]{\div (\eU-\eUprec)}
         \leq \frac{\alpha^2}{K_{dr}} \norm[0,\Dint]{\epD-\epDprec}^2
 \end{multline}
 Then, substituting \eqref{eq:estimOfEUmEUPREC} in \eqref{eq:3equations-s3}, simplifying and using that $\delta_U\geq 1$, we get
 \begin{multline}\label{eq:estim-global}
    (1-\frac{\delta_U}{2})\left(2\mu \norm[0,\Dint]{\bs{\epsilon}(\eU)}^2+\lambda \norm[0,\Dint]{\div\eU}^2\right)
   + \left( \frac{1}{M} +\frac{\alpha^2}{2K_{dr}} \right) \norm[0,\Dint]{\epD}^2  + \Delta t K^{\D} \norm[0,\Dint]{\nabla \epD}^2
   \\
   +  \left(\Delta t-\frac{\Delta t}{\delta_P}\right)\sum_{j=1}^{\numFrac} \eta^j \norm[0,\omega^j]{(\epD)_\pm^j}^2
+  \Delta t \sum_{j=1}^{\numFrac}K^{F^j}\norm[0,\omega^j]{\nabla \epF}^2  + \left(2 \eta^j\Delta t-\Delta t\,\eta^j\delta_P\right)\norm[0,\omega^j]{\epF}
\\
\leq \frac{\alpha^2}{2K_{dr}}\norm[0,\Dint]{\epDprec}^2 
          \,.
 \end{multline}
 This implies, using that  $\delta_U\leq 2$ and $1\leq \delta_P\leq 2$ and the Poincaré inequality, that we get the estimate
 \begin{equation}
     \left( \frac{1}{M} +\frac{\alpha^2}{2K_{dr}} + \frac{\Delta t K^{\D}}{C_P} \right) \norm[0,\Dint]{\epD}^2  
   \leq \frac{\alpha^2}{2K_{dr}}\norm[0,\Dint]{\epDprec}^2 
 \end{equation}
 that implies that we have convergence of the scheme.
\end{proof}
Finally, since the application of the fixed-stress splitting scheme described in Problem \ref{pb:FixedStress-dec} decoupled the flow and the mechanics subproblems, in the following we address independently the two problems.

\section{Optimization based domain decomposition for the 2D-3D flow equation}\label{sec:FlowProblem}
In this section, we focus on the flow subproblem of Problem \ref{pb:FixedStress-dec}. 
In particular, we want to stress that we build independently the meshes on $\D$ and on each fracture $F^j$, with $j=1,\ldots,\numFrac$. This can be handle applying the optimization based domain decomposition approach first introduced in \cite{BPSa}. We refer to \cite{S2024Finel} for what follows.
Let us consider the flow coupled system of equations in \eqref{eq:flow3d_fs} and \eqref{eq:flow2d_fs} and we decoupled them through the introduction of three additional interface variables for each fracture, i.e. $\Psi^{+^j},\Psi^{-^j},\Psi^{F^j}$ for any $j=1,\ldots,\numFrac$. 
Being $\star = \{+,-,F\}$, let $W_h^{f,\star^j}$ denote the discrete space of $\Psi^{\star^j}$ for any $j=1,\ldots,\numFrac$. 
Possible choices for any $W_h^{f,\star^j}$ are being equal to $V_h^{f,F^j}$ or to the space of piecewise constants defined on $\mathcal{T}^{F^j}$.
We refer to \cite{S2024Finel} for more general choices, also taking into account different triangulations of $F^j$.
Then it is possible to rewrite equations \eqref{eq:flow3d_fs} and \eqref{eq:flow2d_fs} as the following problem.
\begin{problem}\label{pb:optFlux}
    Find $(p_h^\D)^{k,i}\in V_h^{f,\D} $, $(p_h^{F^j})^{k,i}\in V_h^{f,F^j}$, $\Psi^{+^j}\in W_h^{f,+^j}$, $\Psi^{-^j}\in W_h^{f,-^j}$, $\Psi^{F^j}\in W_h^{f,F^j}$ for any $j=1,\ldots,\numFrac$ such that
    \begin{equation}
        \min \sum_{j=1}^{\numFrac}\norm[\lebl{\omega^j}]{(p_h^{\D,j})_+^{k,i}-\Psi^{+^j}}^2 + \norm[\lebl{\omega^j}]{(p_h^{\D,j})_-^{k,i}-\Psi^{-^j}}^2 + \norm[\lebl{\omega^j}]{(p_h^{F^j})^{k,i}-\Psi^{F^j}}^2
    \end{equation}
    subject to the following PDEs
    \begin{align}
     \begin{aligned}
\frac{1}{\Delta t}\int_{\Dint} \left(\frac{1}{M} + \frac{\alpha^2}{K_{dr}} \right)(p_h^\D)^{k,i}  q_h^\D  
+ \int_{\Dint} K^{\D} \nabla (p_h^\D)^{k,i} \, \nabla q_h^{\D} +  \sum_{j=1}^{\numFrac}\int_{\omega^j} \eta^j \left((p_h^{\D,j})^{k,i}_+\,(q_h^{\D,j})_+ + (p^{\D,j}_h)^{k,i}_-\,(q^{\D,j}_h)_-\right)\\
 -  \sum_{j=1}^{\numFrac}\int_{\omega^j} \eta^j \Psi^{F^j} \left((q_h^{\D,j})_+ + (q_h^{\D,j})_-\right)
= \frac{1}{\Delta t}\left(\int_{\Dint} \frac{1}{M} (p_h^\D)^{k-1}  q_h^\D + \int_{\Dint} \alpha \div \bs{u}_h^{k-1}  q_h^\D\right)+ \int_{\Dint} g\, q_h^\D  
\\
- \frac{1}{\Delta t}\int_{\Dint} \alpha \div \bs{u}_h^{k,i-1}  q_h^\D 
+\frac{1}{\Delta t}\int_{\Dint}\frac{\alpha^2}{K_{dr}}(p_h^\D)^{k,i-1}  q_h^\D 
\quad \forall \; q_h^\D\in V_h^{f,\D} 
\end{aligned}\label{eq:constraintsDomain}
\\
\begin{aligned}
    \int_{\omega^j} K^{F^j} \nabla (p_h^{F^j})^{k,i} \, \nabla q_h^{F^j} + \int_{\omega^j} 2 \eta^j\, (p_h^{F^j})^{k,i} \, q_h^{F^j} - \int_{\omega^j} \eta^j\left(\Psi^{+^j} + \Psi^{-^j}\right) q_h^{F^j} = 0  \quad \forall \; q_h^{F^j}\in V_h^{f,F^j} 
    \\ \mbox{and 
} \forall j=1,\ldots,\numFrac
\end{aligned}\label{eq:constraintsFrac}
    \end{align}
\end{problem}

Now, we give the construction of the matrix representation of the previous problem.
Let $\{\varphi_l^\D\}_{l=1}^{\mathcal{N}^\D}$ be a set of basis functions of $V_h^{f,\D}$ with $\mathcal{N}^\D$ being its dimension and, for each fracture $F^j$ with $j=1,\ldots, \numFrac$, let $\{\varphi_l^{F^j}\}_{l=1}^{\mathcal{N}^{F^j}}$ and $\{\vartheta^{\star^j}\}_{l=1}^{\mathcal{\tilde{N}}^{\star^j}}$, for any $\star=\{+,-,F\}$, be the sets of basis functions of $V_h^{f,F^j}$ and $W_h^{f,\star^j}$, with $\mathcal{N}^{F^j}$ and $\mathcal{\tilde{N}}^{\star^j}$ being the dimension, respectively.
Hence, for any step of the proposed method indexed by the couple $(k,i)$, the discrete variables are written as
\begin{equation}
\begin{aligned}
(p_h^\D)^{k,i} = \sum_{l=1}^{\mathcal{N}^\D} \limits \mathrm{p}_l^\D \varphi_l^\D,\;\; 
\mbox{and for any }j=1,\ldots, \numFrac\;\;
(p_h^{F^j})^{k,i} = \sum_{l=1}^{\mathcal{N}^{F^j}}\limits \mathrm{p}^{F^j}_l\varphi_l^{F^j},
\\
\Psi^{+^j} = \sum_{l=1}^{\mathcal{\tilde{N}}^{+^j}}\limits
\Psi^{+^j}_l\vartheta_l^{+^j},
\;\;
\Psi^{-^j}= \sum_{l=1}^{\mathcal{\tilde{N}}^{-^j}}\limits
\Psi^{-^j}_l\vartheta_l^{-^j},
\;\;
\Psi^{F^j}= \sum_{l=1}^{\mathcal{\tilde{N}}^{F^j}}\limits
\Psi^{F^j}_l\vartheta_l^{F^j}.
\end{aligned}
\end{equation}
Let $\boldsymbol{G}^\D\in \mathbb{R}^{\mathcal{N}^\D\times \mathcal{N}^\D}$ 
be the matrix defined as
\begin{equation}
\left(\boldsymbol{G}^\D\right)_{l,m} = \int_{\omega^j} ( \varphi_l^{\D,j})_+ (\varphi_m^{\D,j})_+ + ( \varphi_l^{\D,j})_- (\varphi_m^{\D,j})_-\,,
\end{equation}
and for each $j=1,\ldots, \numFrac$, let $\boldsymbol{G}^{F^j}\in\mathbb{R}^{\mathcal{N}^{F^j}\times \mathcal{N}^{F^j}}$ and  $\boldsymbol{G}^{\Psi^{\star^j}}\in\mathbb{R}^{\mathcal{\tilde{N}}^{\star^j}\times \mathcal{\tilde{N}}^{\star^j}}$ for any $\star=\{+,-,F\}$ be the matrices defined as
\begin{equation}
\left(\boldsymbol{G}^{F^j}\right)_{l,m} = \int_{F^j}  \varphi_l^{F^j} \varphi_m^{F^j} \,,
\;\;
\left(\boldsymbol{G}^{\Psi^{\star^j}}\right)_{l,m} = \int_{F^j} \vartheta_l^{\star^j}\vartheta_m^{\star^j}\,.
\end{equation}
Moreover, for each $j=1,\ldots, \numFrac$ let $\boldsymbol{E}^{+^j}\in\mathbb{R}^{\mathcal{N}^\D\times\mathcal{\tilde{N}}^{+^j} }$, $\boldsymbol{E}^{-^j}\in\mathbb{R}^{\mathcal{N}^\D\times\mathcal{\tilde{N}}^{-^j}}$ and $\boldsymbol{E}^{F^j}\in\mathbb{R}^{\mathcal{N}^{F^j}\times\mathcal{\tilde{N}}^{F^j}}$ be the matrices defined as
\begin{equation}
    \left(\boldsymbol{E}^{+^j}\right)_{l,m} = -2\int_{\omega^j}  ( \varphi_l^\D)_+ \vartheta_m^{+^j}\,, \;
    \left(\boldsymbol{E}^{-^j}\right)_{l,m} = -2\int_{\omega^j} ( \varphi_l^\D)_- \vartheta_m^{-^j}\,, \;
    \left(\boldsymbol{E}^{F^j}\right)_{l,m} = -2\int_{\omega^j} \varphi_l^{F^j} \vartheta_m^{F^j}\,.
\end{equation}

Considering all the fractures, we collect the above defined matrices in the following forms
\begin{equation}
    \begin{aligned}
        \mathbf{G^F} = \begin{bmatrix} \mathbf{G}^{F^1} & 0  & 0
        \\
        0 & \ddots & 0 
        \\
        0 & 0 & \mathbf{G}^{F^{\numFrac}}
        \end{bmatrix}\,,
        \;\;
        \mathbf{G^{\Psi^\star}} = \begin{bmatrix} \mathbf{G}^{\Psi^{\star^1}} & 0  & 0
        \\
        0 & \ddots & 0 
        \\
        0 & 0 & \mathbf{G}^{\Psi^{\star^{\numFrac}}}
        \end{bmatrix} \,,
        \;\;
        \mathbf{E^{\star}} = \begin{bmatrix}\mathbf{E}^{\star^1} & \dots & \mathbf{E}^{\star^{\numFrac}}
        \end{bmatrix}
    \end{aligned}
\end{equation}
with $\star=\{+,-,F\}$.
The matrix form of the discrete constraint equations \eqref{eq:constraintsDomain} and \eqref{eq:constraintsFrac} can be obtained defining $\mathbf{A}^\D\in\mathbb{R}^{\mathcal{N}^\D\times \mathcal{N}^\D}$ as
\begin{equation}
\begin{aligned} 
(\mathbf{A}^\D)_{l,m} = \frac{1}{\Delta t}\int_{\Dint} \left(\frac{1}{M} + \frac{\alpha^2}{K_{dr}} \right)\varphi_l^\D \varphi_m^\D  
+ \int_{\Dint} K^{\D} \nabla \varphi_l^\D \nabla \varphi_m^\D + 
\\
\sum_{j=1}^{\numFrac}\int_{\omega^j} \eta^j \left(\varphi_l^{\D,j})_+\,(\varphi_m^{\D,j})_+ + (\varphi_l^{\D,j})_-\,(\varphi_m^{\D,j})_-\right)
\end{aligned}
\end{equation}
and for each fracture $j=1,\ldots,\numFrac$ the matrix $\mathbf{A}^{F^j}\in\mathbb{R}^{\mathcal{N}^{F^j}\times\mathcal{N}^{F^j}}$ as
\begin{equation}
    (\mathbf{A}^{F^j})_{l,m} = \int_{\omega^j} K^{F^j} \nabla \varphi_l^{F^j} \, \nabla \varphi_m^{F^j} + \int_{\omega^j} 2 \eta^j\, \varphi_l^{F^j} \, \varphi_m^{F^j}
\end{equation}
and matrices $\mathbf{B}^{\star^j}\in\mathbb{R}^{ \mathcal{N}^{F^j}\times\mathcal{\tilde{N}}^{\star^j}}$ with $\star=\{+,-\}$ and $\mathbf{B}^{F^j}\in \mathbb{R}^{ \mathcal{N}^{\D}\times\mathcal{\tilde{N}}^{F^j}}$ respectively as
\begin{equation}
    \begin{aligned}
        (\mathbf{B}^{\star^j})_{l,m} =  \int_{\omega^j} \eta^j \vartheta_m^{\star^j} \varphi_l^{F^j}
        \,,
        \quad
        (\mathbf{B}^{F^j})_{l,m} =  \int_{\omega^j} \eta^j \vartheta_m^{F^j} \left((\varphi_l^{\D,j})_+ + (\varphi_l^{\D,j})_-\right)\,.
    \end{aligned}
\end{equation}
Considering all the fractures, we collect the above defined matrices in the following forms
\begin{equation}
\mathbf{A^F} = \begin{bmatrix}\mathbf{A}^{F^1} & \dots & \mathbf{A}^{F^{\numFrac}}\end{bmatrix}\,, \quad
    \mathbf{B^{\star}} = \begin{bmatrix}\mathbf{B}^{\star^1} & \dots & \mathbf{B}^{\star^{\numFrac}}\end{bmatrix}
    \,,
    \quad\mbox{with }
    \star=\{+,-, F\}\,.
\end{equation}
Finally, taking into account all the matrices defined above, we conclude that solving Problem \ref{pb:optFlux} is equivalent to solving a linear system with the matrix 
$\begin{bmatrix}
    \boldsymbol{\mathrm{G}} & \boldsymbol{\mathrm{C}}^\intercal \\ \boldsymbol{\mathrm{C}} & \boldsymbol{\mathrm{0}}
\end{bmatrix}$
where
\begin{equation}
    \boldsymbol{\mathrm{G}} = \begin{bmatrix}
    \boldsymbol{G}^\D & \boldsymbol{0} & \mathbf{E^+} & \mathbf{E^-} & \boldsymbol{0}
    \\
    \boldsymbol{0} & \mathbf{G^F} & \boldsymbol{0} &\boldsymbol{0} & \mathbf{E^F}
    \\
    (\mathbf{E^+})^\intercal & \boldsymbol{0} & \mathbf{G^{\Psi^+}}& \boldsymbol{0}& \boldsymbol{0}
    \\
    (\mathbf{E^-})^\intercal & \boldsymbol{0} &\boldsymbol{0}&\mathbf{G^{\Psi^-}}& \boldsymbol{0}
    \\
    \boldsymbol{0}& (\mathbf{E^F})^\intercal& \boldsymbol{0}& \boldsymbol{0} & \mathbf{G^{\Psi^F}}
    \end{bmatrix}
    \mbox{ and  }
    \boldsymbol{\mathrm{C}} = \begin{bmatrix}
        \mathbf{A}^\D& \boldsymbol{0}& \boldsymbol{0} &\boldsymbol{0}& - \mathbf{B^{F}}
        \\
        \boldsymbol{0}&  \mathbf{A^F} & - \mathbf{B^{+}} &  - \mathbf{B^{-}} & \boldsymbol{0}
    \end{bmatrix}\,.
\end{equation}
We refer to \cite{S2024Finel} for the analysis of the stability of this problem.

\section{The mechanics problem}\label{sec:mechProblem}
In this section, we focus in the mechanics subproblem of Problem \ref{pb:FixedStress-dec}, giving the construction of its matrix representation.
Let $\{\bs{\varphi}^M_l\}_{l=1}^{\mathcal{N}^M}$ be a set of basis functions of $\bs{V}^m_h$ with $\mathcal{N}^M$ being its dimension and, for each fracture $F^j$ with $j=1,\ldots,\numFrac$, let $\{\bs{\ell}_l^j\}_{l=1}^{\mathcal{N}^{Q^j}}$ be the set of basis functions of $\bs{Q}^{m,j}_h$ with $\mathcal{N}^{Q^j}$ being its dimension.
Hence, for any step of the proposed method indexed by the couple $(k,i)$ the discrete variables are written as
\begin{equation}
    \bs{u}^{k,i}_h = \sum_{l=1}^{\mathcal{N}^M} \mathrm{u}_l \bs{\varphi}^M_l\,,
    \quad 
    \mbox{and for any }
    j=1,\ldots,\numFrac \;\; (\bs{\Lambda}^j_h)^{k,i} = \sum_{l=1}^{\mathcal{N}^{Q^j}} \mathrm{\Lambda}^j_l \bs{\ell}_l^j\,.
\end{equation}
Let $\bs{G}^M\in\mathbb{R}^{\mathcal{N}^M\times\mathcal{N}^M}$ be the matrix defined as
\begin{equation}
    (\bs{G}^M)_{l,m} = \int_{\Dint} \bs{C} \bs{\epsilon}(\bs{\varphi}^M_m) : \bs{\epsilon}(\bs{\varphi}^M_l) 
\end{equation}
and for each fracture $j=1,\ldots,\numFrac$ let $\mathbf{B^M}^j\in\mathbb{R}^{\mathcal{N}^M\times\mathcal{N}^{Q^j}}$ be the matrix defined as
\begin{equation}
    (\mathbf{B^M}^j)_{l,m} = \int_{\omega^j} (\bs{\ell}^j_m)\left((\bs{\varphi}^M_l)_- - (\bs{\varphi}^{M}_l)_+\right)\,.
\end{equation}
Considering all the fracture, we collect the above defined matrices in the following form
\begin{equation}
    \mathbf{B^M} = \begin{bmatrix}\mathbf{B}^{M^1} & \dots & \mathbf{B}^{M^{\numFrac}}\end{bmatrix}\,.
\end{equation}
Finally, it is easy to see that solving the mechanics subproblem of Problem \ref{pb:FixedStress-dec} is equivalent to solving a linear system with the matrix 
$\begin{bmatrix}
    \boldsymbol{\mathrm{G}^M} & \boldsymbol{\mathrm{B^M}} \\ \boldsymbol{\mathrm{B^M}}^\intercal & \boldsymbol{\mathrm{0}}
\end{bmatrix}\,.$
\section{Numerical results}\label{sec:numres}
In this section, we present three numerical test to assess the stability and the good behavior of the proposed approached to solve coupled poro-mechanics problems.
The first test is the uniaxial compaction test, known as Terzaghi problem \cite{terzaghi,Kadeethum2020}, which aim at testing the accuracy of the fluid-solid coupling. Then, the following two test consider a fractured domain, with one and two fractures respectively, to test the accuracy of the 2D-3D coupling. 
All the results presented are obtained by an in-house MATLAB code.
For problem discretization we construct a tetrahedral discretization of the bulk domain for the 3D displacements and pressure, and triangular discretizations on each fracture for the 2D pressure solution, for the Lagrange multiplier $\bm{\Lambda}_h$ and for the pressure-interface variables $\bm{\Psi}^\star$, $\star={+,-,F}$. We remark that the discretization on the fracture is independent from the discretization of the bulk domain, and also the discretization of the 2D pressure can be independent from that of the interface variables and of the Lagrange multipliers \cite{S2024Finel}. 
For mixed-dimensional problems we define the mesh parameter $h$ as a three-value tuple,  $h=[h_{3D}, \, h_{p}, \, h_{\Lambda}]$, each representing the maximum element size for the 3D mesh, for the 2D mesh of the pressure and pressure-interface variables and for the 2D mesh of the Lagrange multipliers, respectively. Indeed, here, for simplicity, we will always choose the mesh for the 2D pressure equal to the mesh for the interface variables $\bm{\Psi}^\star$, $\star={+,-,F}$. Further, the mesh for the Lagrange multipliers will be coarse, compared to $h_{3D}^{2/3}$, to help satisfy the inf-sup condition \cite{Aagaard2013}. Linear lagrangian finite elements are be used for pressures and displacements, with the additional XFEM basis functions to represent discontinuous solutions on non-conforming meshes. It should be noticed that here displacements are computed as discontinuous functions in order to obtain equilibrium stresses on the fractures that can be used to evaluate fault slip conditions. Piece-wise constant functions are instead used for pressure-interface variables and Lagrange multipliers.

For the numerical resolution, we use the Backward Euler scheme in time. At each time iteration, the mechanics subproblem is decoupled from the pressure subproblem  via the fixed-stress scheme. At each fixed-stress iteration the mechanics subproblem is solved monolithically, whereas the pressure mixed-dimensional problem is solved via the optimization formulation using the conjugate gradient scheme. The stopping criterion for the fixed stress scheme is a tolerance, denoted as $\mathrm{tol}_{\mathrm{FS}}$ on the relative distance between two consecutive solutions. The same tlerance is used for both displacement and pressure solutions. The stopping criterion for the PCG scheme is a tolerance, $\mathrm{tol}_{\mathrm{PCG}}$, on the residual relative to the residual at the first iteration. The choice of a monolithic scheme for the mechanics problem is clearly not optimal and is only dictated by the fact that the mechanics sub-problem is not a mixed dimensional problem, and thus, its efficient resolution is not the focus of this work. A suitable solver for the mechanics subproblem with this formulation is described in \cite{Aagaard2013}.

\subsection{Test 1} 
In this test, we solve the Terzaghi problem \cite{terzaghi,Kadeethum2020} in a homogeneous material, considering a three dimensional domain $\D=[0,1]^3$. The domain is subjected to a compression loading $\bs{\sigma}\cdot\bs{n}=1$, imposed on the face $z=1$, and the displacement is allowed along the $z$-axis on the lateral faces. A graphical representation of the domain with boundary conditions for the mechanics problem is depicted in Figure \ref{fig:DataTest1}. 
For what concerns the flow problem we impose homogeneous Dirichlet boundary conditions on the whole $\partial\D$.
The exact pressure solution, depending on the time $t$ and the distance $\mathrm{d}:=1-z$ from the drainage boundary, is given by
\begin{equation}\label{eq:exactSolTerzaghi}
    p^\D(\mathrm{d},t) = \sum_{m=0}^{\infty} \frac{4}{\pi(2m+1)} \sin\left(\frac{(2m+1)\pi}{2}\mathrm{d}\right) \exp{\left(\frac{(2m+1)^2\pi^2}{4}C_v t\right) }\,,
\end{equation}
where
$
    C_v = 3E\frac{1-\nu}{1+\nu} K^\D
$
with  $E$ and $\nu$ denoting respectively the bulk modulus and the Poisson ratio.
We recall the following relations to compute the Lamé coefficients given the bulk modulus  and the Poisson ratio
\begin{equation}
    \lambda = \frac{3E\nu}{1+\nu}\,, \quad \mu = \frac{3E(1-2\nu)}{2(1+\nu)}\,.
\end{equation}
We use the following values for the relevant parameters: bulk modulus $E=10^3\,\mathrm{kPa}$, Poisson ratio $\nu=0.25$, compressibility constant $\frac{1}{M}=0\,\mathrm{Pa}^{-1}$, hydraulic permeability $k^\D = 10^{-12}\,\mathrm{m}^2$, $\mu=10^{-6} \,\mathrm{kPa\, s}$ Biot-Willis constant $\alpha = 1$ and a null external force $g$. 
We consider null initial conditions for pressure $p^\D$ and displacement $\bs{u}$.
We construct the discretization of $\D$ using TetGen \cite{Tetgen2015} setting the mesh size to $10^{-4}$, the time-step increment $\delta t$ for the Backward Euler scheme is taken equal to $1$ and the fixed-stress scheme is terminated when the iterate-based criterion (norm of the difference between consecutive iterates) reaches a tolerance of $10^{-6}$.

In Figure \ref{fig:resTest1}, we report the results at four distinct time instants $t\in \{5,10,15,20\}$ comparing a one dimensional section of the discrete solution $p^\D_h$ obtained with the proposed routine against the analytical solution \eqref{eq:exactSolTerzaghi}. The results show a strong agreement between the discrete and analytical solutions.
\begin{figure}
     \begin{subfigure}[t]{0.4\textwidth}
         \centering
         \includegraphics[width=0.8\textwidth]{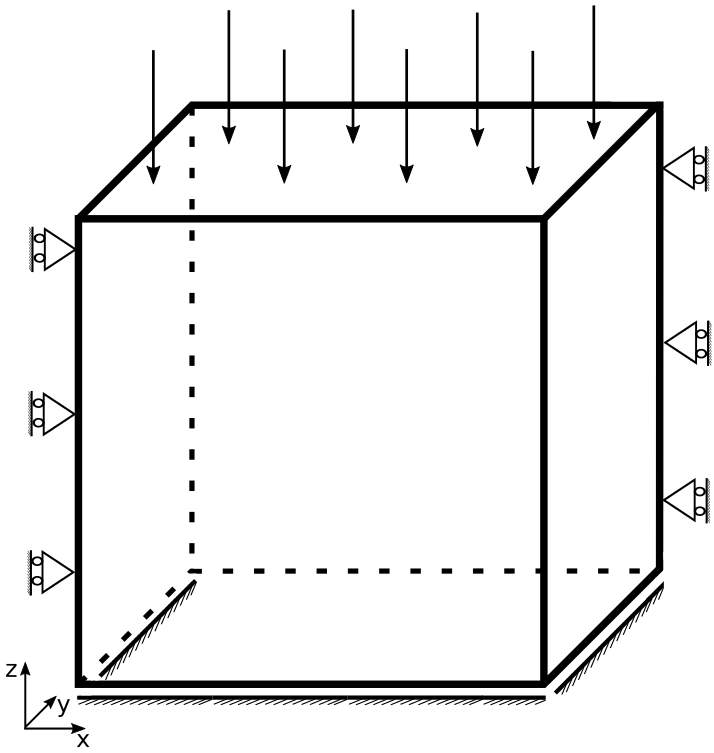}
         \caption{\footnotesize{Domain with boundary conditions}}
         \label{fig:DataTest1}
     \end{subfigure}
     \hfill
     \begin{subfigure}[t]{0.5\textwidth}
         \centering
         \includegraphics[width=0.8\textwidth]{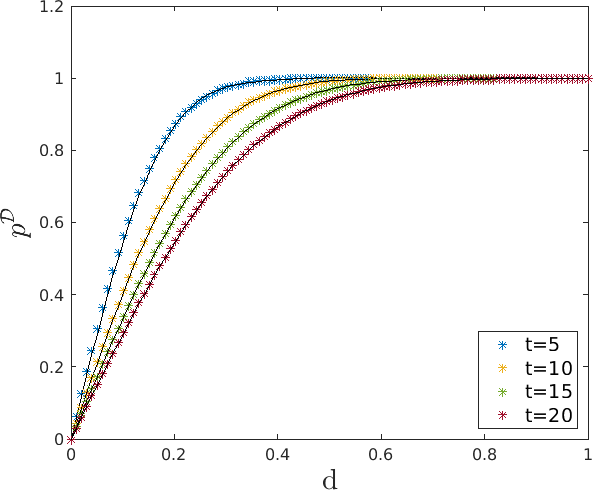}
         \caption{\footnotesize{Considering four different chosen time intants, comparison between the numerical approximation of the pressure $p^\D_h$ (starred lines) and the analytical solution $p^\D$ (black line). }}
         \label{fig:resTest1}
     \end{subfigure}
        \caption{Test 1 -- Terzaghi problem}
        \label{fig:Test1}
\end{figure}
\subsection{Test 2}
In this test, named \textit{Test 2}, we consider a single-fractured domain $\D=[0,1]^3$, where the fracture $F$ lies on the plane $z= 0.5$, passing through the barycenter of the domain, and ending inside the domain. 
The fracture extends from $0\leq x \leq 1$ and $0\leq y\leq 0.8$.
The domain is subjected to a compression load $\bs{\sigma} \cdot \bs{n}= -1\mathrm{kN/m^2}$ applied on the face $z=1$, and the displacement is allowed along the z-axis on the lateral faces, and the bottom face $z=0$ is clamped.
For the flow problem, a Dirichlet boundary condition $p^\D=1$ is imposed on the on the face lying on the plane $z=0$, whereas homogeneous Neumann boundary conditions are prescribed on all the remaining faces. Homogeneous Neumann boundary conditions are also enforced along the boundary of the fracture $F$.
A schematic representation of the computational setting is shown in Figure~\ref{fig:Test2_domain}.
In this test, we set the bulk modulus $E=10^3\,\mathrm{kPa}$, Poisson ratio $\nu=0.25$, compressibility constant $\frac{1}{M}=0\,\mathrm{Pa}^{-1}$, hydraulic permeability of the domain $k^{\D} = 10^{-12}\,\mathrm{m}^{2}$ and effective fracture permeability in the tangential plane $k^{\parallel}=10^{-14}\,\mathrm{m}^3$, the effective hydraulic permeability of the fracture in the normal direction $k^{\perp}=10^{-13}\,\mathrm{m}$, the Biot-Willis constant $\alpha = 1$ and a null external force $g$.
As initial conditions, we prescribe equilibrium displacement and zero pressure over the entire domain.


\begin{figure}
    \centering
    \includegraphics[width=0.8\linewidth]{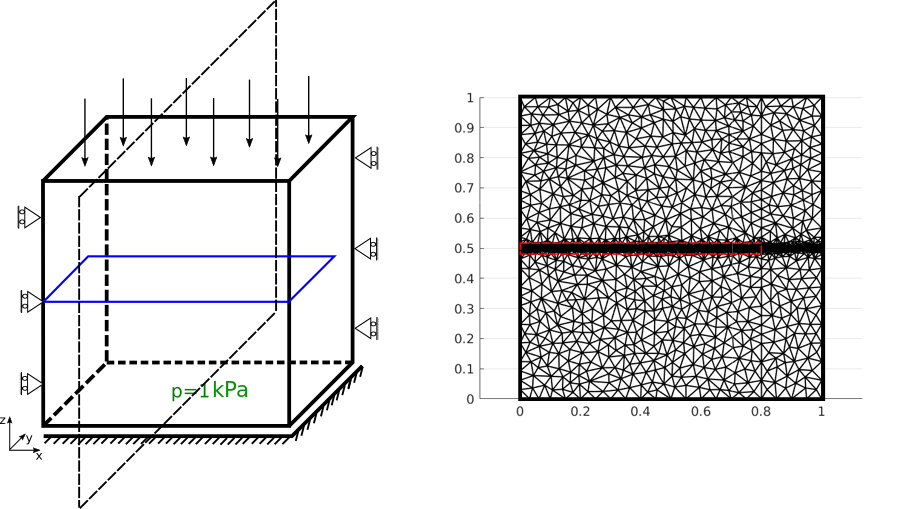}
    \caption{Test 2: mixed-dimensional and analogous 2D equi-dmensional domains }
    \label{fig:Test2_domain}
\end{figure}
\begin{figure}
    \centering
    \begin{subfigure}[t]{0.45\textwidth}
        \includegraphics[width=0.96\linewidth]{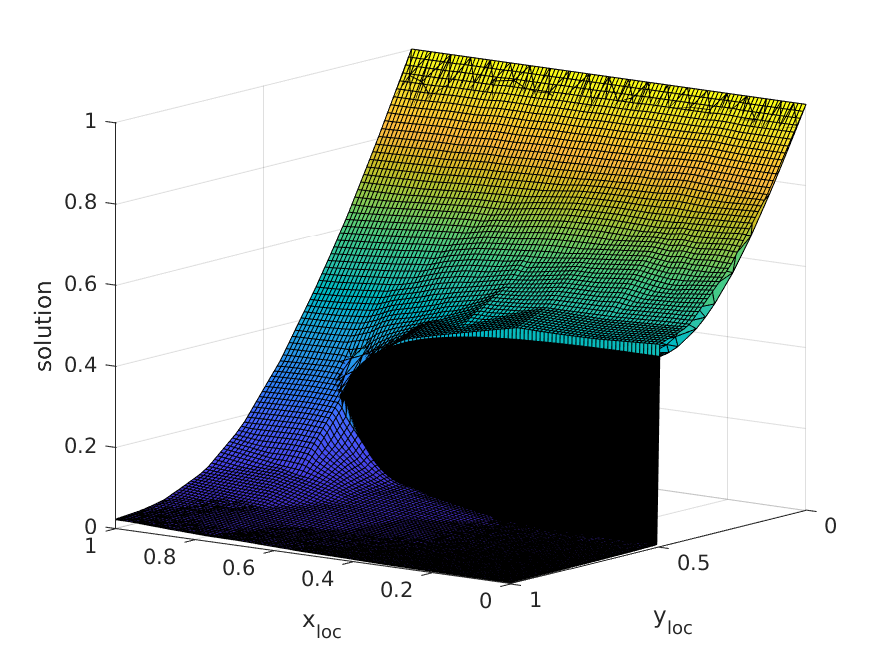}
    \caption{Pressure}
    \label{fig:Test2_pressure_coarse}
    \end{subfigure}
    \hfill
    \begin{subfigure}[t]{0.45\textwidth}
        \includegraphics[width=\linewidth]{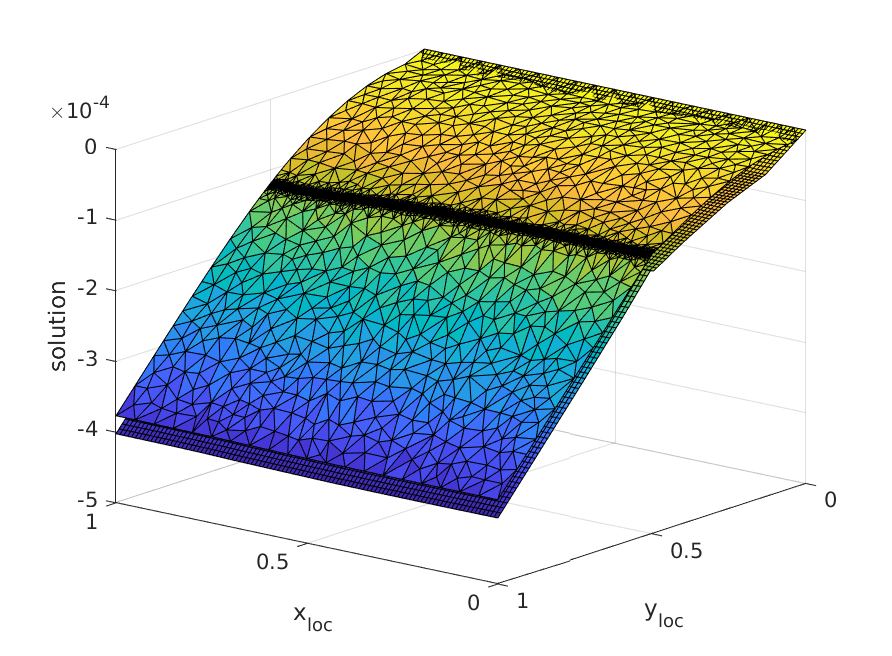}
    \caption{Displacement along $z$-direction}
    \label{fig:Test2_displacement_coarse}
    \end{subfigure}
    \caption{Test 2: comparison of the solution on the coarser grid $h_{c}$, $t=50 \, \mathrm{s}$}
        \label{fig:Test2Comparison_coarse}
\end{figure}
\begin{figure}
    \centering
    \begin{subfigure}[t]{0.45\textwidth}
        \includegraphics[width=0.96\linewidth]{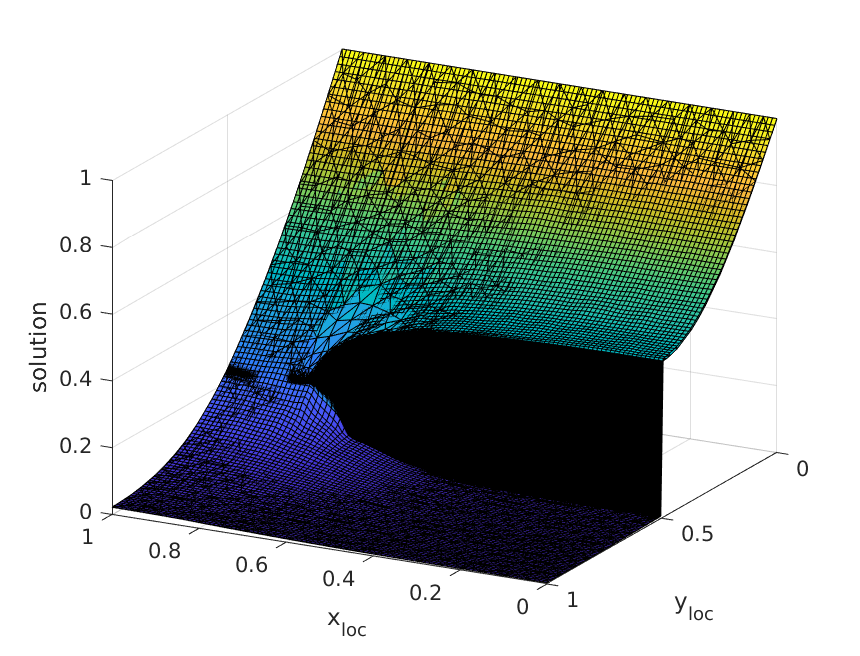}
    \caption{Pressure}
    \label{fig:Test2_pressure_fine}
    \end{subfigure}
    \hfill
    \begin{subfigure}[t]{0.45\textwidth}
        \includegraphics[width=\linewidth]{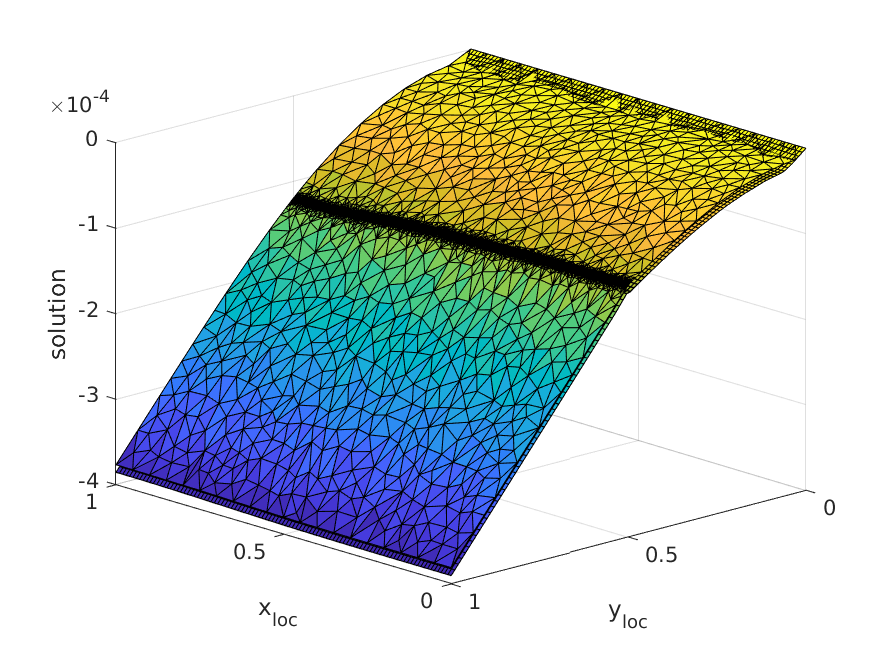}
    \caption{Displacement along $z$-direction}
    \label{fig:Test2_displacement_fine}
    \end{subfigure}
    \caption{Test 2: comparison of the solution on the finer grid $h_{f_2}$, $t=50 \, \mathrm{s}$}
        \label{fig:Test2Comparison_fine}
\end{figure}

This test provides a validation of the proposed approach through the comparison of the solution obtained with the proposed approach with the solution of an analogous equi-dimensional problem. In more details, we solve the 3D mixed-dimensional problem on the domain depicted in Figure~\ref{fig:Test2_domain}, left, with the above data, and we consider the restriction of such solution on the  plane $x=0.5$, that is shown in dashed lines in the same figure.
Then we solve a 2D equi-dimensional problem on the domain shown in Figure~\ref{fig:Test2_domain}, on the right. Boundary conditions mimic those of the mixed dimensional domain: for the elasticity problem, the bottom edge of the domain is clamped, only vertical displacement is allowed on the two lateral edges, and a $ -1\mathrm{kN/m}$ force is applied on the top edge; For the pressure problem, a value of $1 \mathrm{kPa}$ is prescribed on the bottom edge and all other edges are insulated. The equi-dimensional fracture is placed in the red-dashed region, with a thickness of $0.01\mathrm{m}$, with centerline on $z=0.5$ and extending between $y=0$ and $y=0.8$. The physical parameters are the same as in the reduced-dimensional problem, we only remark that the hydraulic permeability of the equi-dimensional fracture in the normal direction is now set to $10^{-12}\,\mathrm{m}^{-2}$.
  We solve the mixed-dimensional domain on three different meshes: a first coarse mesh is considered, having mesh parameter $h_c=[0.001, \, 0.05,\, 0.05]$, and two finer meshes $h_{f_1}=[0.0001, \, 0.01, \, 0.01]$ and $h_{f_2}=[0.0001, \, 0.005, \, 0.02]$. The mesh for the equi-dimensional problem has maximum element size of $0.001$ outside of the fracture zone, and $10^{-6}$ in the strip $0.495<z<0.505 \ \land 0<y<1$.
For the mixed dimensional problem time step for the Backward Euler scheme is $\delta t=1 \, \mathrm{s}$, and we use a relative tolerance equal to $1e-6$ for the fixed-stress scheme and a tolerance of $10^{-8}$ on the relative residual for PCG iterations. The equi-dimensional problem is solved using again Backward Euler with $\delta t=1$ for time advancing, while we solve monolithically the linear system at each time iteration. We will refer to the solution of the equi-dimensional problem as the \textit{reference} solution in the following.

Figures~\ref{fig:Test2Comparison_coarse}-\ref{fig:Test2Comparison_fine} show a comparison of the solution of the mixed dimensional problem on the $x=0.5$ plane with the solution of the analogous 2D equi-dimensional problem, both at $t=50$. In these figures, the two-dimensional plot of the 3D solutions of the mixed dimensional problem is obtained by evaluating the basis functions of the discrete solution on a cartesian mesh counting $100\times 100$ nodes. The pressure solution is actually discontinuous across the fracture thanks to the presence of XFEM additional basis functions. In contrast, the pressure solution of the equi-dimensional problem is continuous, but with a very strong gradient across the fracture, which can be correctly resolved thanks to mesh refinement in the fracture region. In more detail, Figure~\ref{fig:Test2_pressure_coarse} displays the pressure solution of the mixed dimensional problem obtained on the coarser mesh $h_c$ overlapped with the reference pressure solution, while Figure~\ref{fig:Test2_displacement_coarse} shows the magnitude of the displacement along the vertical direction obtained on the same coarse mesh along with that of the reference. Figures~\ref{fig:Test2_pressure_fine} and \ref{fig:Test2_displacement_fine} propose the same comparison for pressure and displacement, respectively, but in this case the mixed dimensional solution is obtained on the finer mesh $h_{f_2}$, the solution on the mesh $h_{f_1}$ being, however, indistinguishable from the one proposed, in this kind of plot. A comparison of Figures~\ref{fig:Test2Comparison_coarse}-\ref{fig:Test2Comparison_fine} allows to conclude that, under mesh refinement, the solution obtained with the proposed approach converges to the reference one. 

\begin{figure}
    \centering
    \begin{subfigure}[t]{0.32\textwidth}
        \includegraphics[width=\linewidth]{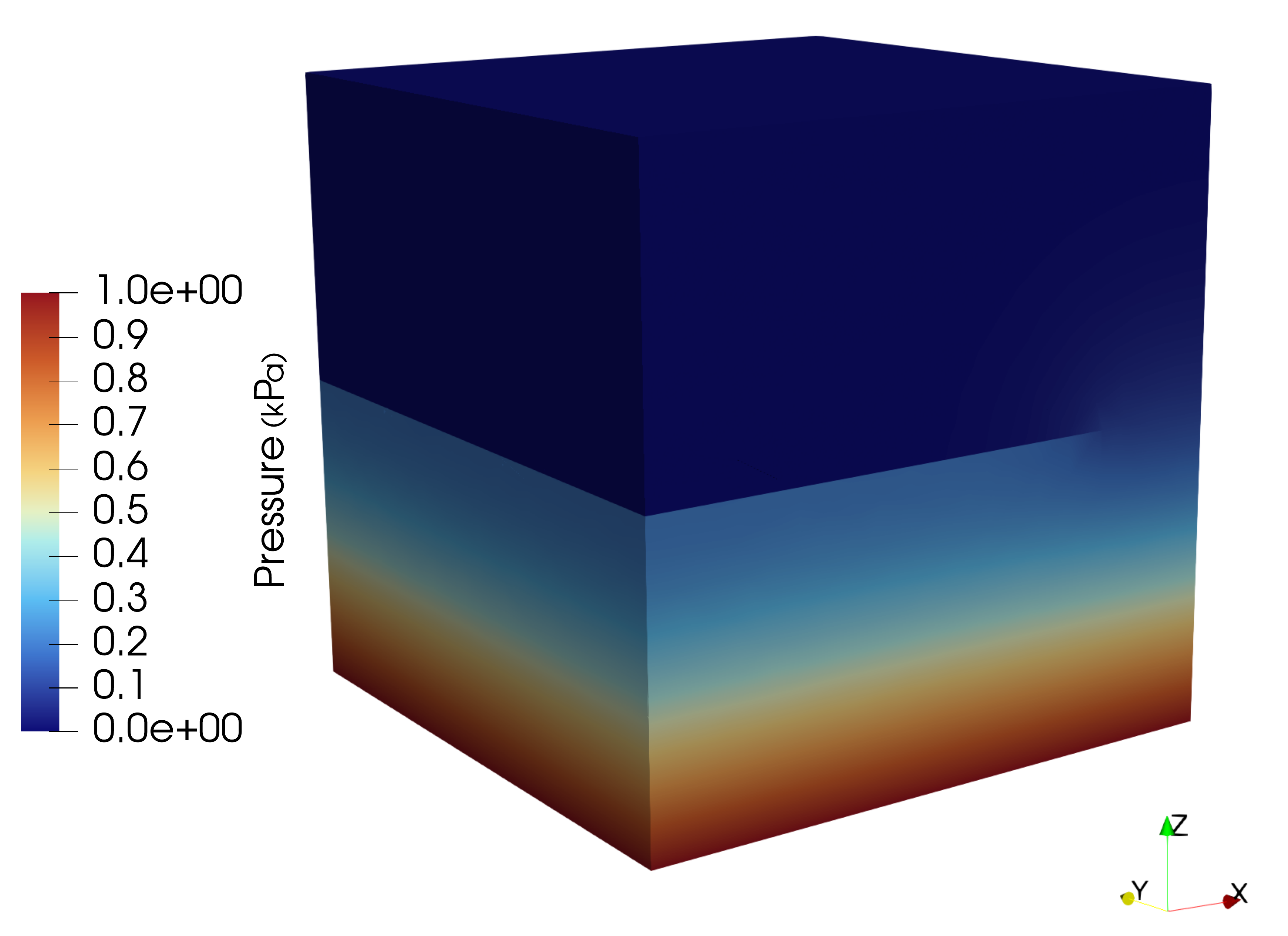}
        \caption{Pressure}
    \label{fig:Test2_Sol3Dt25Pressure}
    \end{subfigure}
    \begin{subfigure}[t]{0.32\textwidth}
    \centering
        \includegraphics[width=0.8\linewidth]{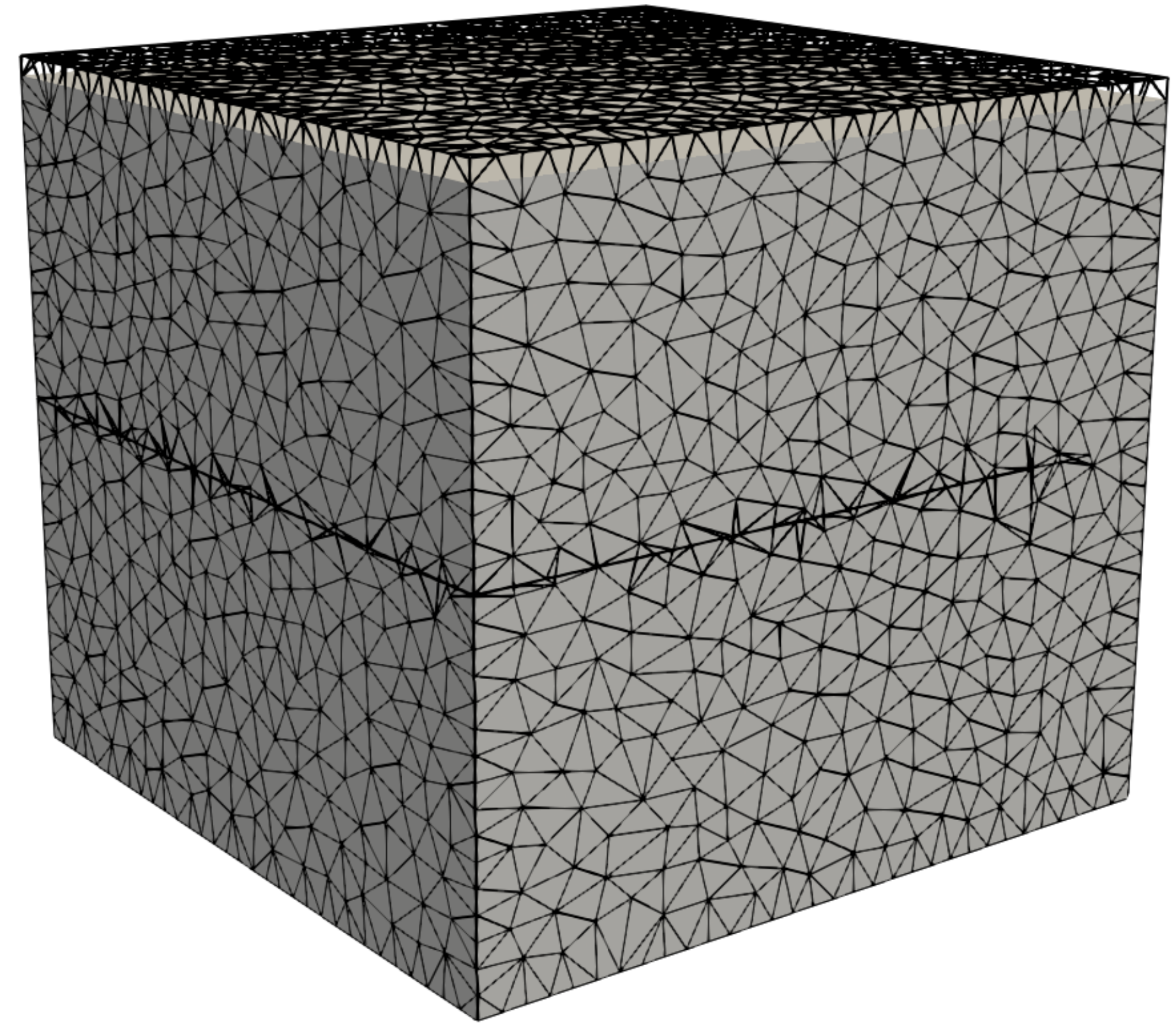}
        \caption{Displacement (200x)}
    \label{fig:Test2_Sol3Dt25Displacement}
    \end{subfigure}
    \begin{subfigure}[t]{0.32\textwidth}
        \includegraphics[width=0.9\linewidth]{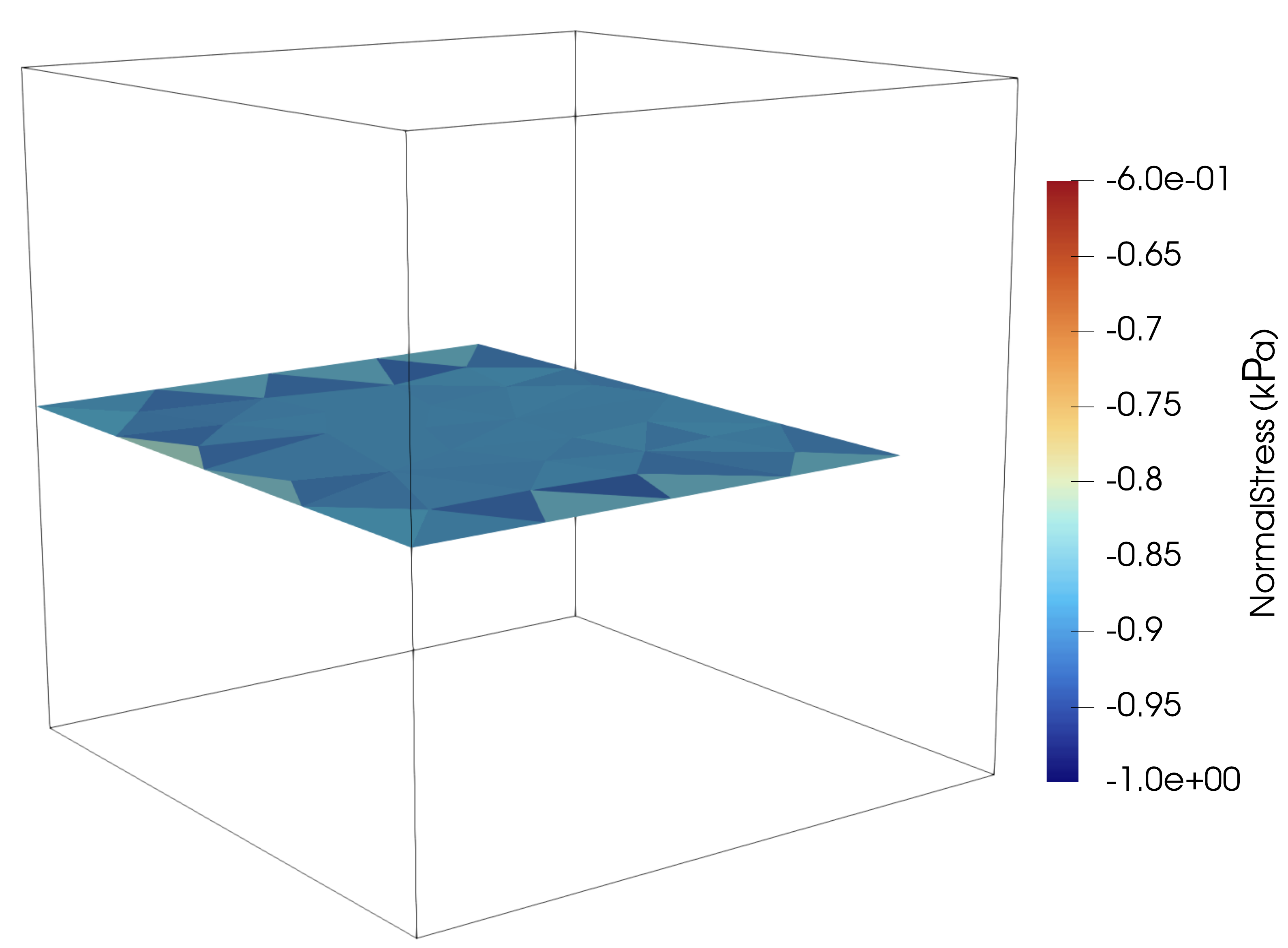}
        \caption{Normal stress on fracture}
    \label{fig:Test2_Sol3Dt25Fracture}
    \end{subfigure}
    \caption{Test 2: Solution at $t=25 \, \mathrm{s}$ on mesh $h_{f_2}$}
        \label{fig:Test2_3Dt25}
\end{figure}
\begin{figure}
    \centering
    \begin{subfigure}[t]{0.32\textwidth}
        \includegraphics[width=\linewidth]{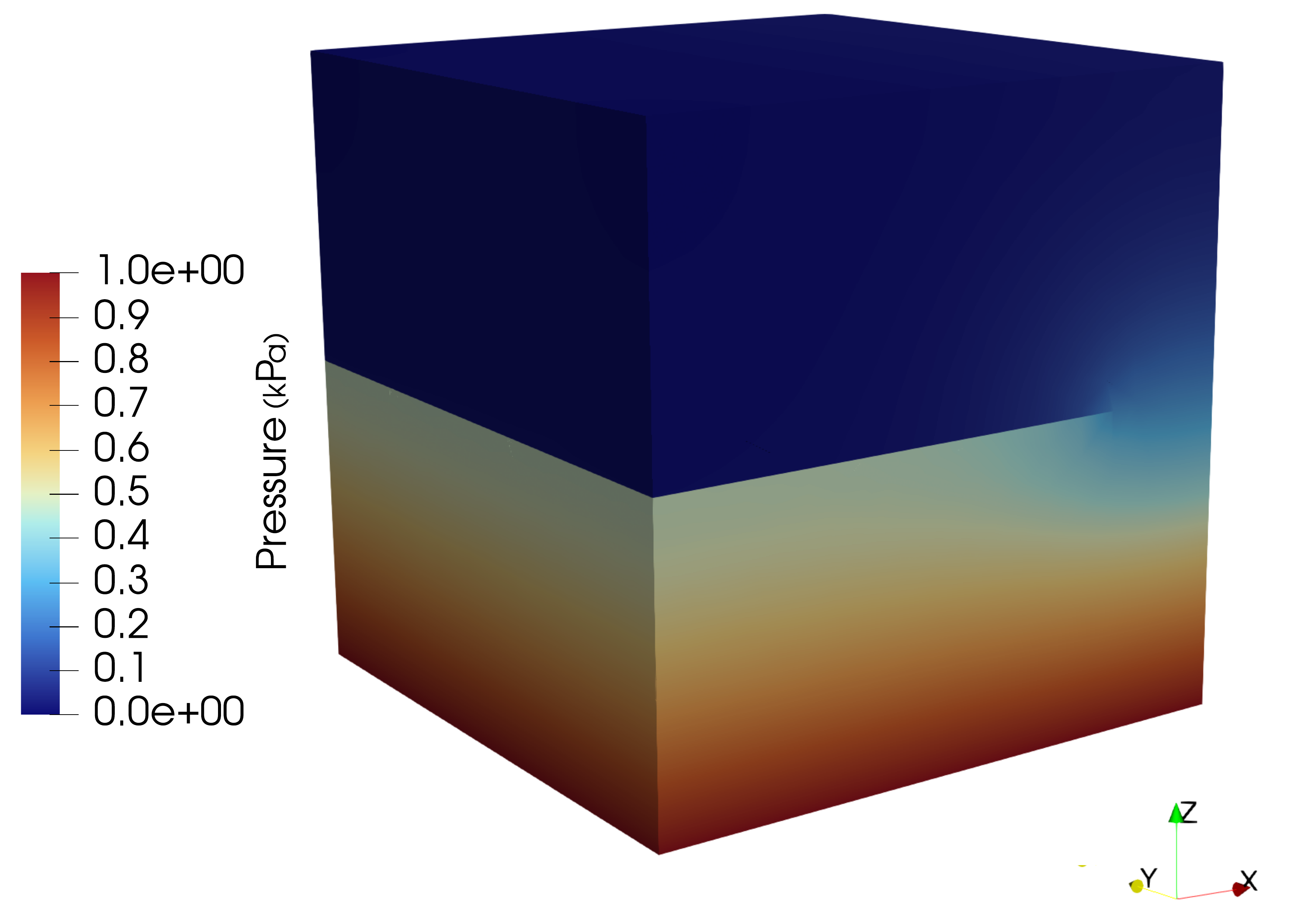}
        \caption{Pressure}
    \label{fig:Test2_Sol3Dt50Pressure}
    \end{subfigure}
    \begin{subfigure}[t]{0.32\textwidth}
    \centering
        \includegraphics[width=0.8\linewidth]{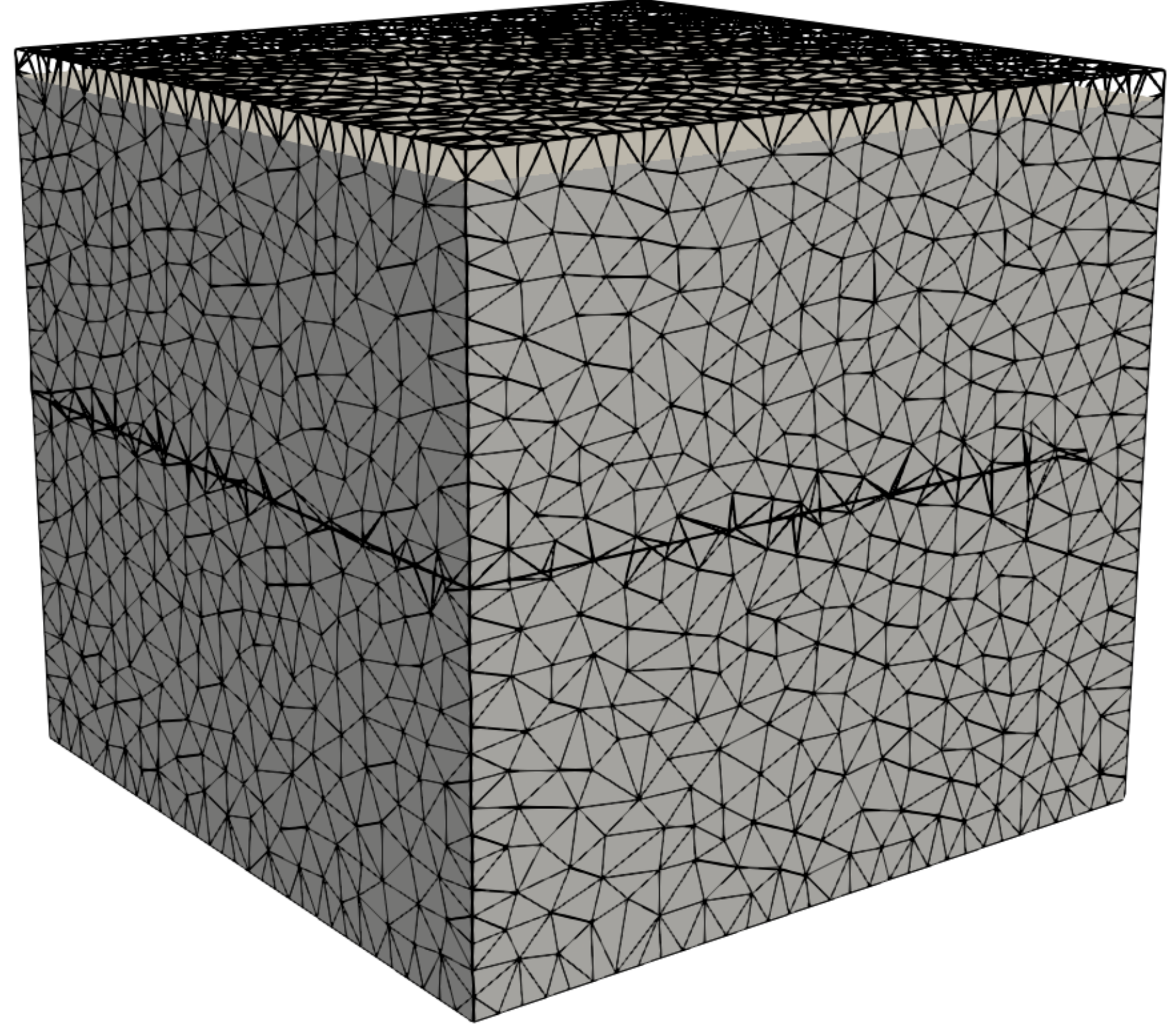}
        \caption{Displacement (200x)}
    \label{fig:Test2_Sol3Dt50Displacement}
    \end{subfigure}
    \begin{subfigure}[t]{0.32\textwidth}
        \includegraphics[width=0.9\linewidth]{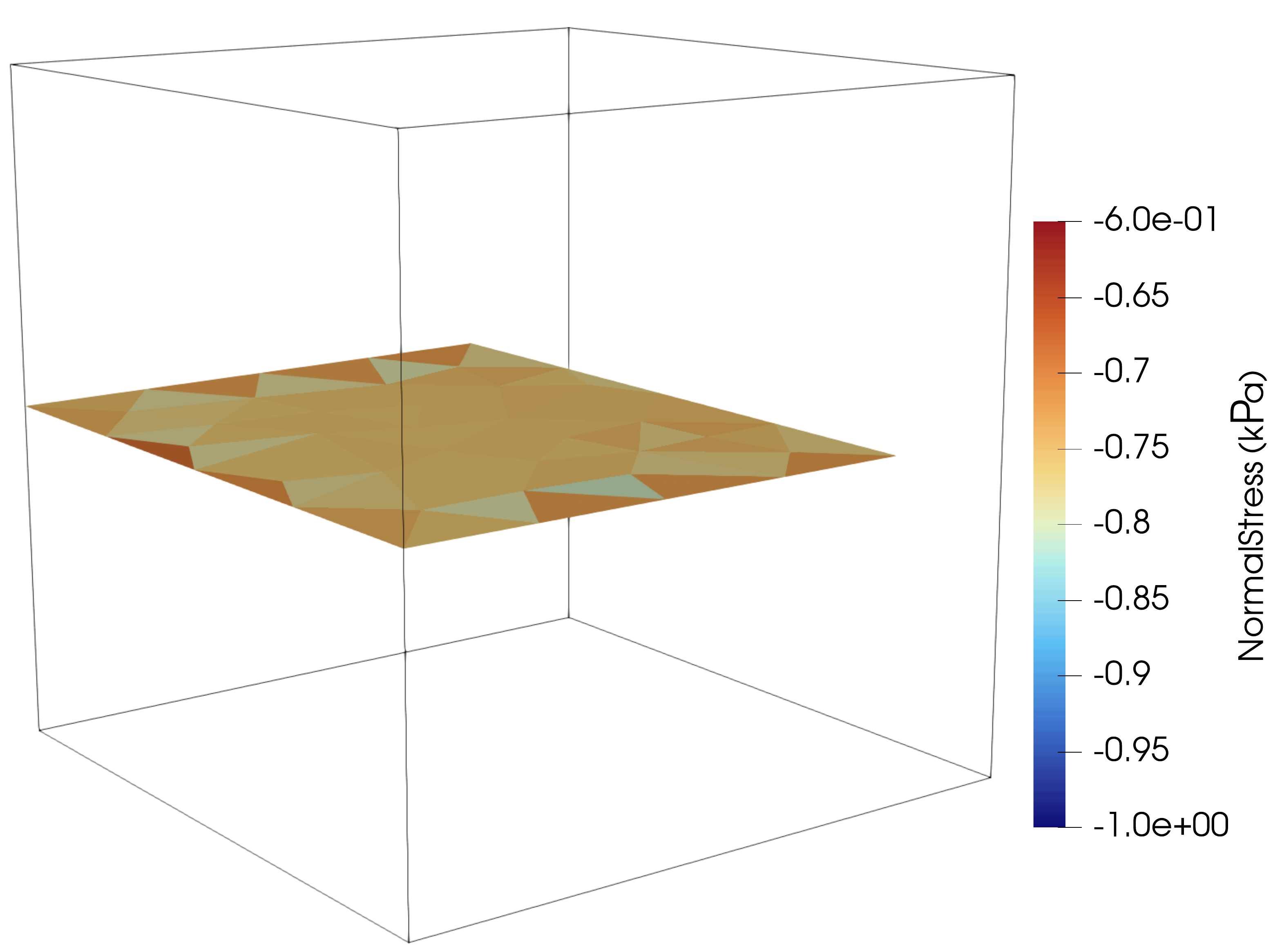}
        \caption{Normal stress on fracture}
    \label{fig:Test2_Sol3Dt50Fracture}
    \end{subfigure}
    \caption{Test 2: Solution at $t=50 \, \mathrm{s}$ on mesh $h_{f_2}$}
        \label{fig:Test2_3Dt50}
\end{figure}

Figures~\ref{fig:Test2_3Dt25}-\ref{fig:Test2_3Dt50} reproduce the 3D plot of the mixed dimensional solution at time $t=25$ and $t=50$, respectively. In both images, the leftmost picture represents the pressure solution, the middle one the displacement, and the rightmost the computed normal stress on the fracture. The discontinuity of the pressure across the fracture can be clearly seen in Figures~\ref{fig:Test2_Sol3Dt25Pressure} and \ref{fig:Test2_Sol3Dt50Pressure}, being instead continuous away from the fracture. It is to remark that the computational mesh is non conforming with the fracture. The mesh, visible in Figures~\ref{fig:Test2_Sol3Dt25Displacement} and \ref{fig:Test2_Sol3Dt50Displacement} is, indeed only for graphical representation purposes, and is obtained from the computational mesh cutting the original tetrahedrons into sub-tetrahedrons that do not cross the fracture. The discrete solution is finally evaluated on the newly generated cells and displayed on such unstructured grid. The discontinuous nature of the basis functions of the discrete solution, provide two different values for the solution across the interface. The pressure solution can thus exhibit a jump across the fracture, that grows with time, as expected.  The displacement, in Figures~\ref{fig:Test2_Sol3Dt25Displacement} and \ref{fig:Test2_Sol3Dt50Displacement}, is shown by moving the mesh according to the computed displacement solution amplified by a factor $200$. The domain shape at $t=0$ is also shown in the same pictures as a white solid, to better highlight the deformation occurring at the subsequent time frames. Displacements are continuous, even if the same discontinuous basis functions are present also in the discrete functional space for this problem. Comparing solutions at $t=25$ and $t=50$ it can be seen that, as expected the displacement in the vertical direction dominates. Also, it is noted taht the presence of the fracture does not break the symmetry in the vertical displacement, as also happens for the reference solution. The continuity condition \eqref{pb:contVar:EqLambda} on the displacement allows the computation of the  stresses on the fracture. The normal component of such stress is displayed in Figures~\ref{fig:Test2_Sol3Dt25Fracture} and \ref{fig:Test2_Sol3Dt50Fracture} for the two considered time-frames. As expected, the distribution of the normal stress is nearly constant on the fracture, and reduces with time. 

\begin{figure}
    \centering
    \begin{subfigure}[t]{0.45\textwidth}
        \includegraphics[width=0.96\linewidth]{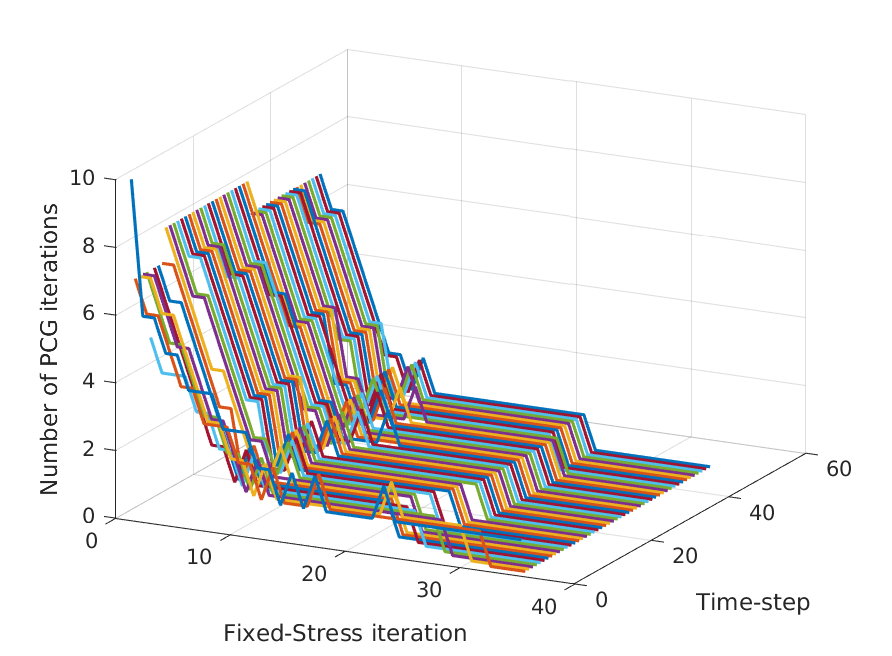}
    \caption{Mesh $h_c$}
    \label{fig:IterInfo_coarse}
    \end{subfigure}
    \hfill
    \begin{subfigure}[t]{0.45\textwidth}
        \includegraphics[width=0.96\linewidth]{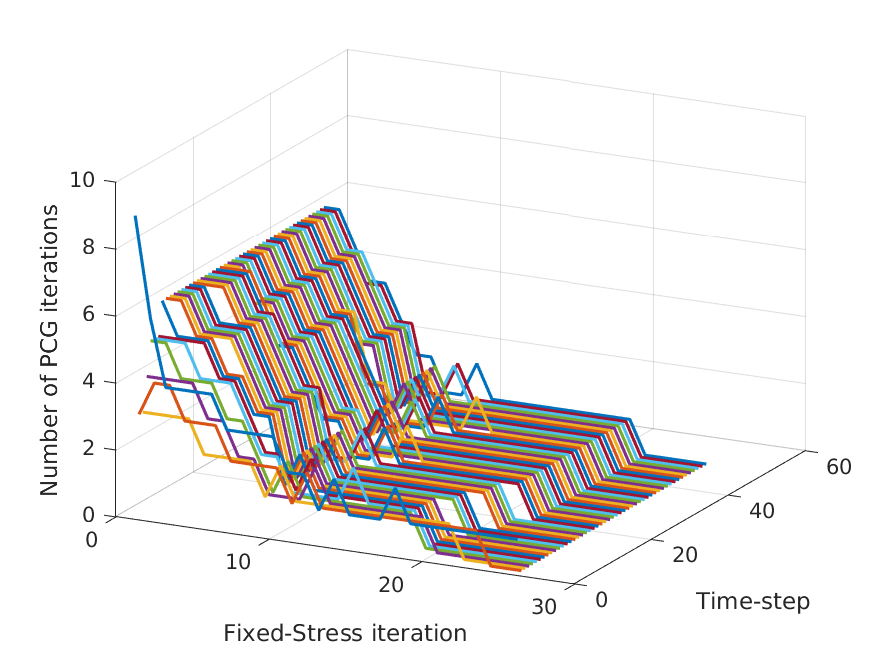}
    \caption{Mesh $h_{f_2}$}
    \label{fig:IterInfo_fine2}
    \end{subfigure}
    \caption{Test 2: Iterations of the fixed-stress and PCG schemes at each time-step for selected meshes}
        \label{fig:Test2IterationInfo}
\end{figure}

Figure~\ref{fig:Test2IterationInfo} reports a plot of the iterations of the fixed stress method and of the PCG method required to reach the prescribed tolerances at each time step, set to $\mathrm{tol}_{\mathrm{FS}}=10^{-6}$ and $\mathrm{tol}_{\mathrm{PCG}}=10^{-8}$. Figure~\ref{fig:IterInfo_coarse} relates to the computation of the solution on the coarse mesh $h_c$ and Figure~\ref{fig:IterInfo_fine2} to the finer mesh $h_{f_2}$. For both meshes, it can be seen that the number of iterations required by the fixed stress scheme remains almost unchanged for all time steps, with small variations of few iterations. Also the behavior of the curves of the number of PCG iterations have similar trends across all time steps: a larger number of PCG iterations is needed for the first fixed stress loops, but after about 10 fixed stress loops, the number of PCG iterations drops to about 2, and, finally, just 1 PCG iteration is sufficient to meet the required stopping criterion for the last fixed stress loops. This makes the resolution of the mixed dimensional problem highly computationally efficient. The number of fixed stress iterations is higher for the coarser mesh with a value of about 35 iterations, which reduces to about 24 iterations on mesh $h_{f_2}$. The behavior observed for mesh $h_{f_2}$ is also representative of the trend obtained for mesh $h_{f_1}$.

\subsection{Test 3}
The third numerical example is set in a cubic domain as the one of the previous example, with four embedded planar fractures, as shown in Figure~\ref{fig:Test3_domain}. 
As in the previous test, the material bulk modulus is $E=10^3\,\mathrm{kPa}$, the Poisson ratio $\nu=0.25$, and the compressibility constant $\frac{1}{M}=0\,\mathrm{Pa}^{-1}$. Hydraulic permeability is $k^\D = 10^{-12}\,\mathrm{m}^{-2}$ for the bulk domain, the effective fracture permeability in the tangential plane is $k^{\parallel}=10^{-14}\,\mathrm{m}^3$, for all fractures, while the effective fracture permeability across the plane is $k_1^{\perp}=10^{-13}\,\mathrm{m}$, for $F_1$, $k_{2,3}^{\perp}=10^{-12}\,\mathrm{m}$ for fractures $F_2$ and $F_3$ and $k_4^{\perp}=10^{-10}\,\mathrm{m}$ for $F_4$. The Biot-Willis constant is $\alpha = 1$ and the external bulk force is $g=0$.
Concerning the displacement, for this test, only the bottom face, edges and nodes of the 3D domain are clamped, all other faces having Neumann boundary conditions instead, that are null on the sides. A $-1\mathrm{kN/m^2}$ vertical stress is applied, instead, on the top face. Homogeneous Neumann boundary conditions are enforced on the whole boundary of the 3D domain and of the fractures. A volumetric source term for the pressure is imposed, equal to $10^{-3}\mathrm{s}^{-1}$ in a sphere of radius $0.1 \mathrm{m}$ centered in $[0.75, \, 0.8, \, 0.2]\mathrm{m}$, and zero elsewhere.   
As initial conditions, we prescribe equilibrium displacement and zero pressure over the entire domain.

The computational mesh is shown in Figure~\ref{fig:Test3_mesh}, with mesh parameter $h=[0.0001, \, 0.02,\, 0.02]$. It can be seen that the 3D cells arbitrarily cross the fracture planes.

\begin{figure}
    \centering
    \begin{subfigure}[t]{0.45\textwidth}
    \centering
        \includegraphics[width=0.75\linewidth]{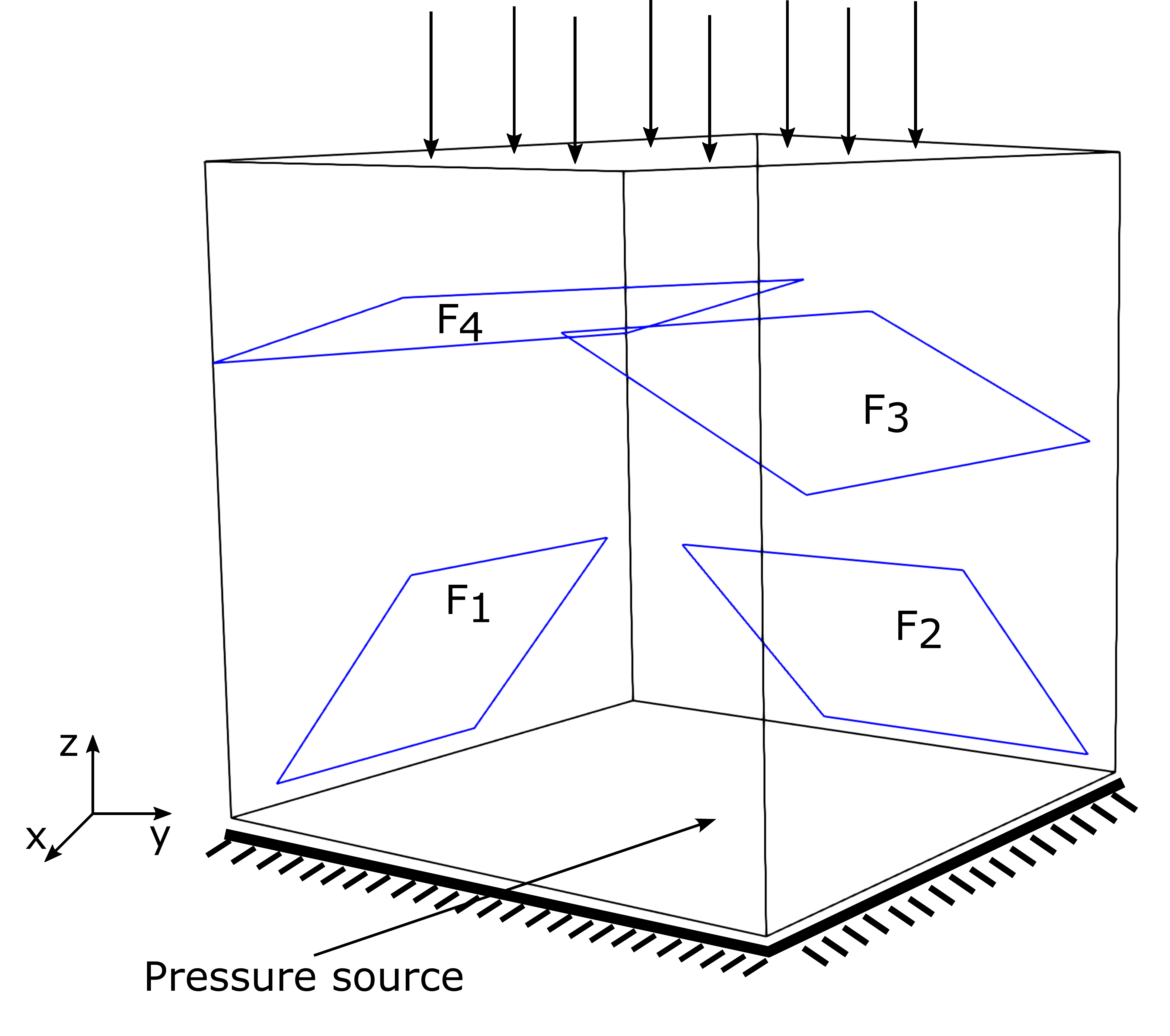}
    \caption{Domain details}
    \label{fig:Test3_domain}
    \end{subfigure}
    \hfill
    \begin{subfigure}[t]{0.45\textwidth}
        \includegraphics[width=\linewidth]{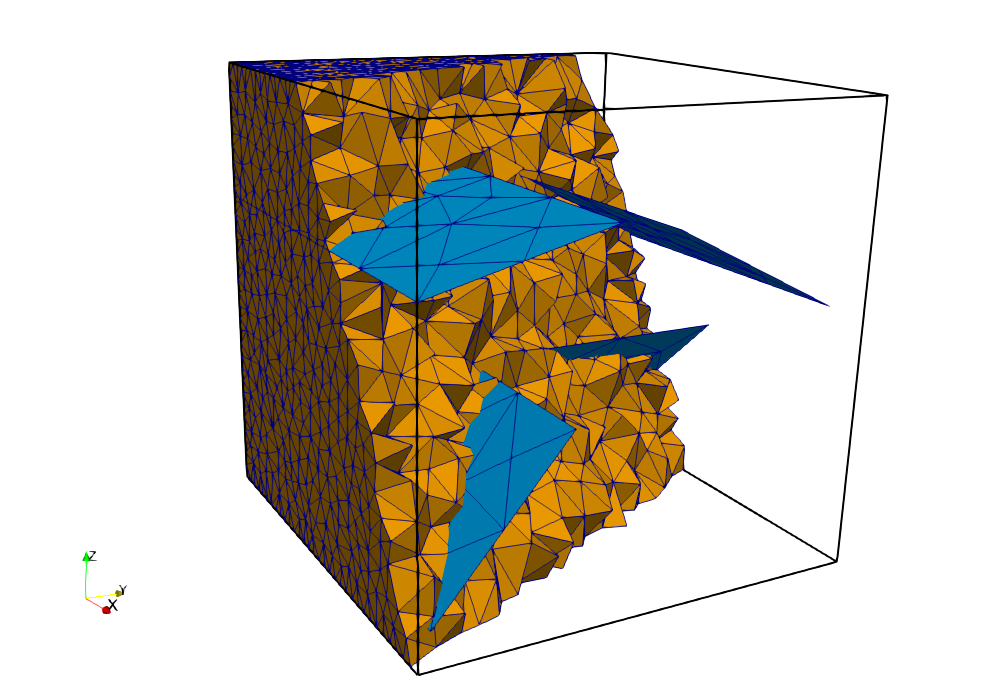}
    \caption{3D and 2D computational mesh}
    \label{fig:Test3_mesh}
    \end{subfigure}
    \caption{Test 3: Computational domain and mesh detail}
        \label{fig:Test3_domain_and_mesh}
\end{figure}

\begin{figure}
    \centering
    \begin{subfigure}[t]{0.32\textwidth}
        \includegraphics[width=\linewidth]{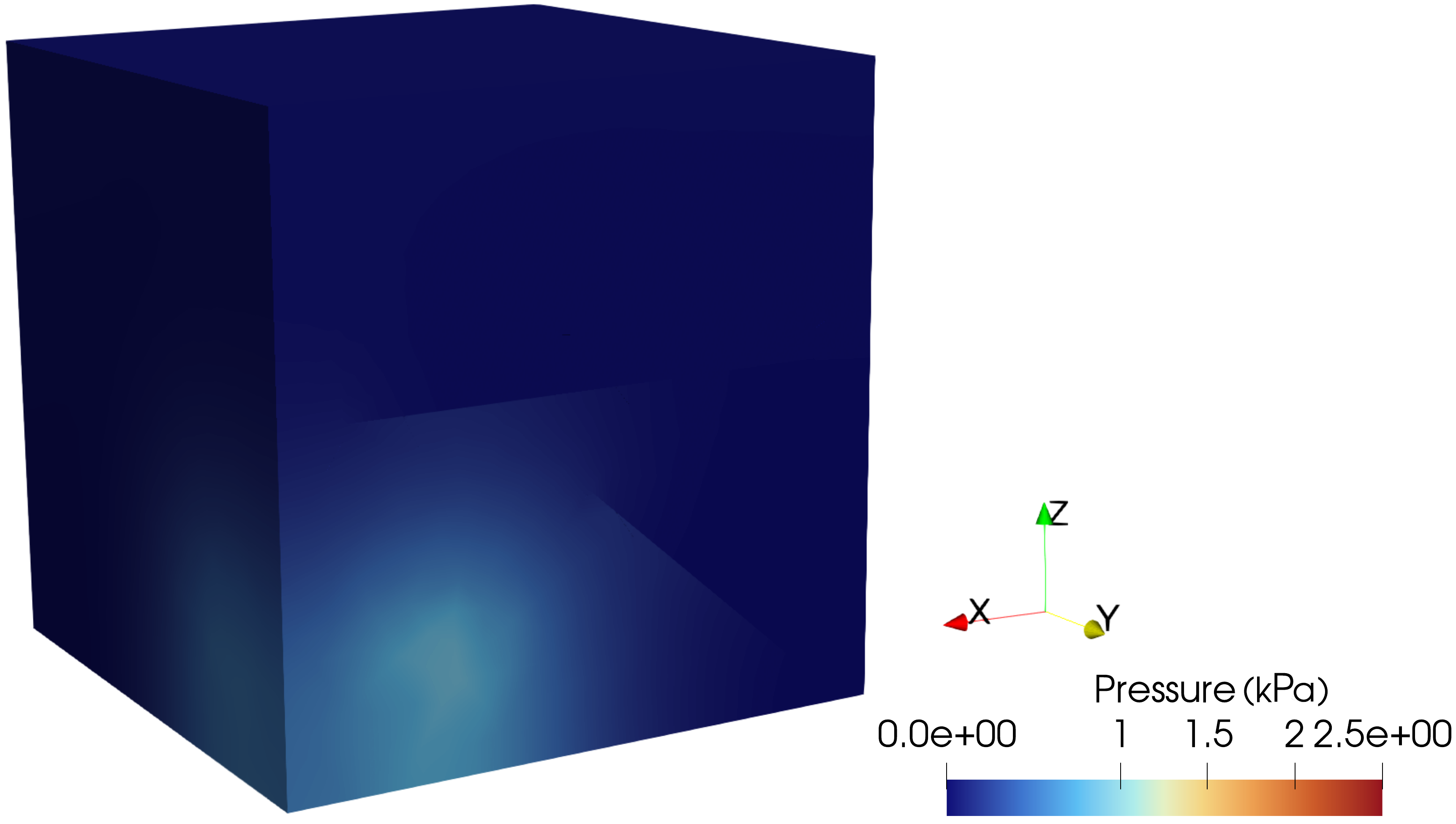}
        \caption{Pressure}
    \label{fig:Test3_Sol3Dt25Pressure}
    \end{subfigure}
    \begin{subfigure}[t]{0.32\textwidth}
        \includegraphics[width=0.75\linewidth]{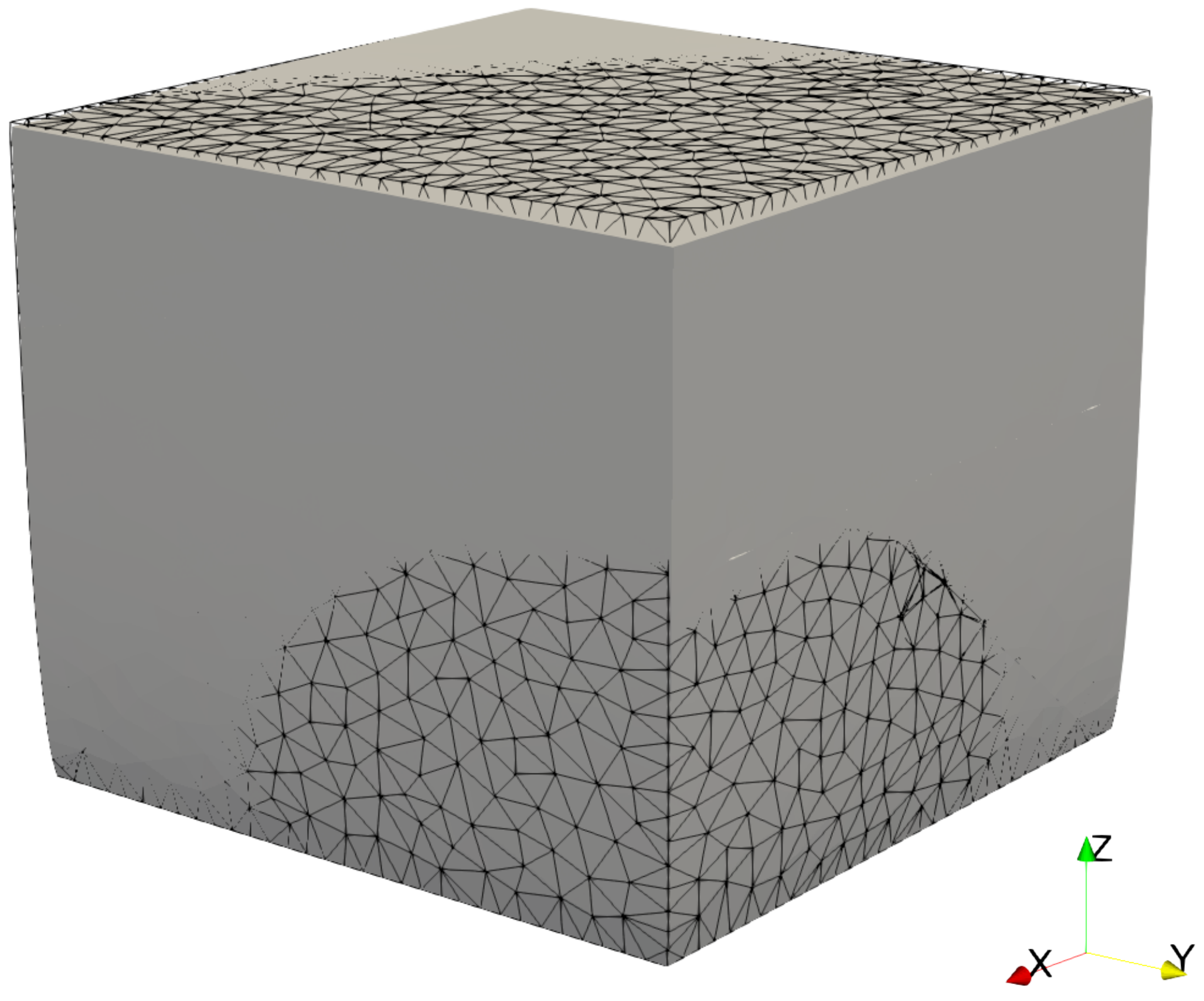}
        \caption{Displacement (200x)}
    \label{fig:Test3_Sol3Dt25Displacement}
    \end{subfigure}
    \begin{subfigure}[t]{0.32\textwidth}
        \includegraphics[width=\linewidth]{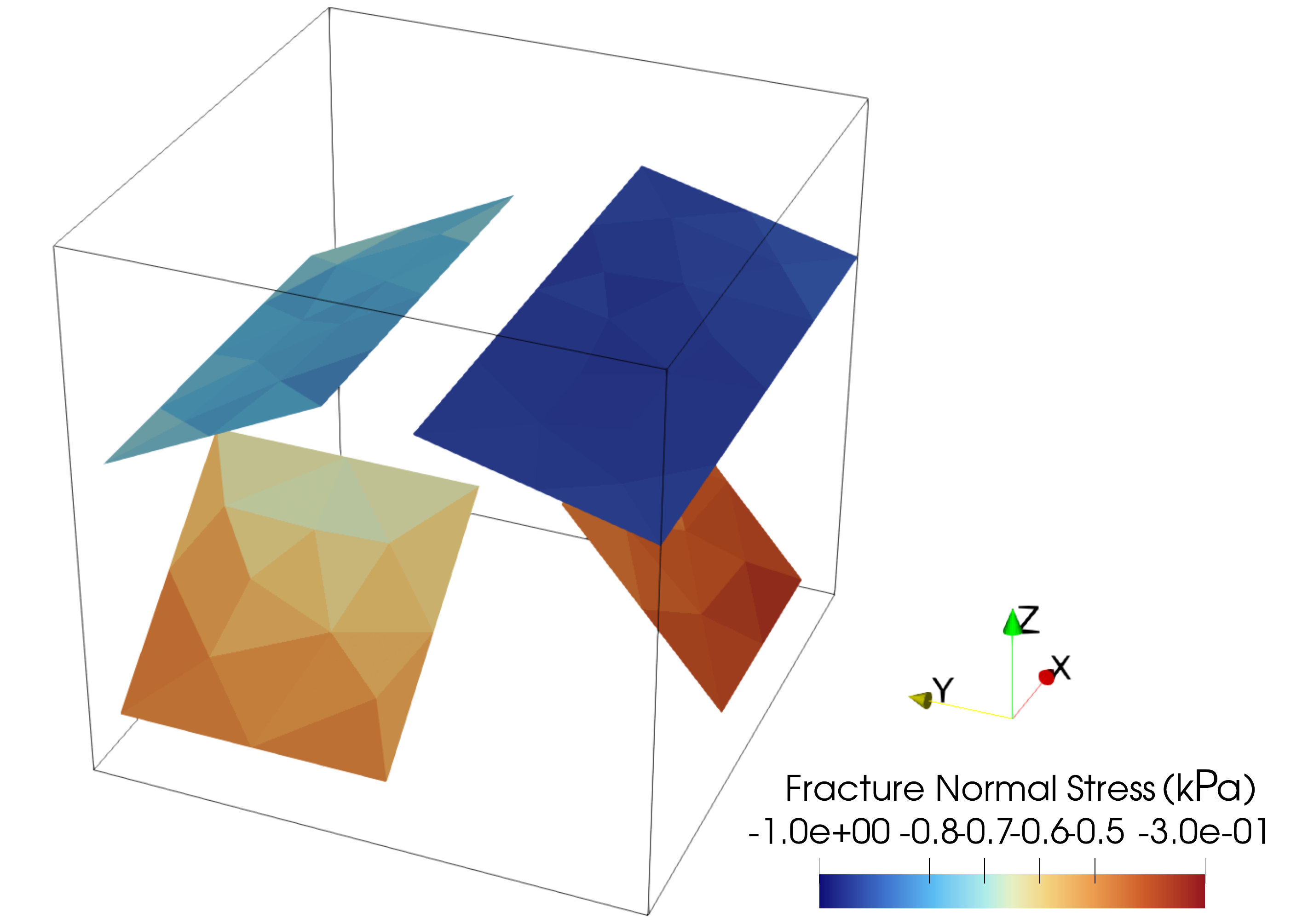}
        \caption{Normal stress on fracture}
    \label{fig:Test3_Sol3Dt25Fracture}
    \end{subfigure}
    \caption{Test 3: Solution at $t=25 \, \mathrm{s}$}
        \label{fig:Test3_3Dt25}
\end{figure}

\begin{figure}
    \centering
    \begin{subfigure}[t]{0.32\textwidth}
        \includegraphics[width=\linewidth]{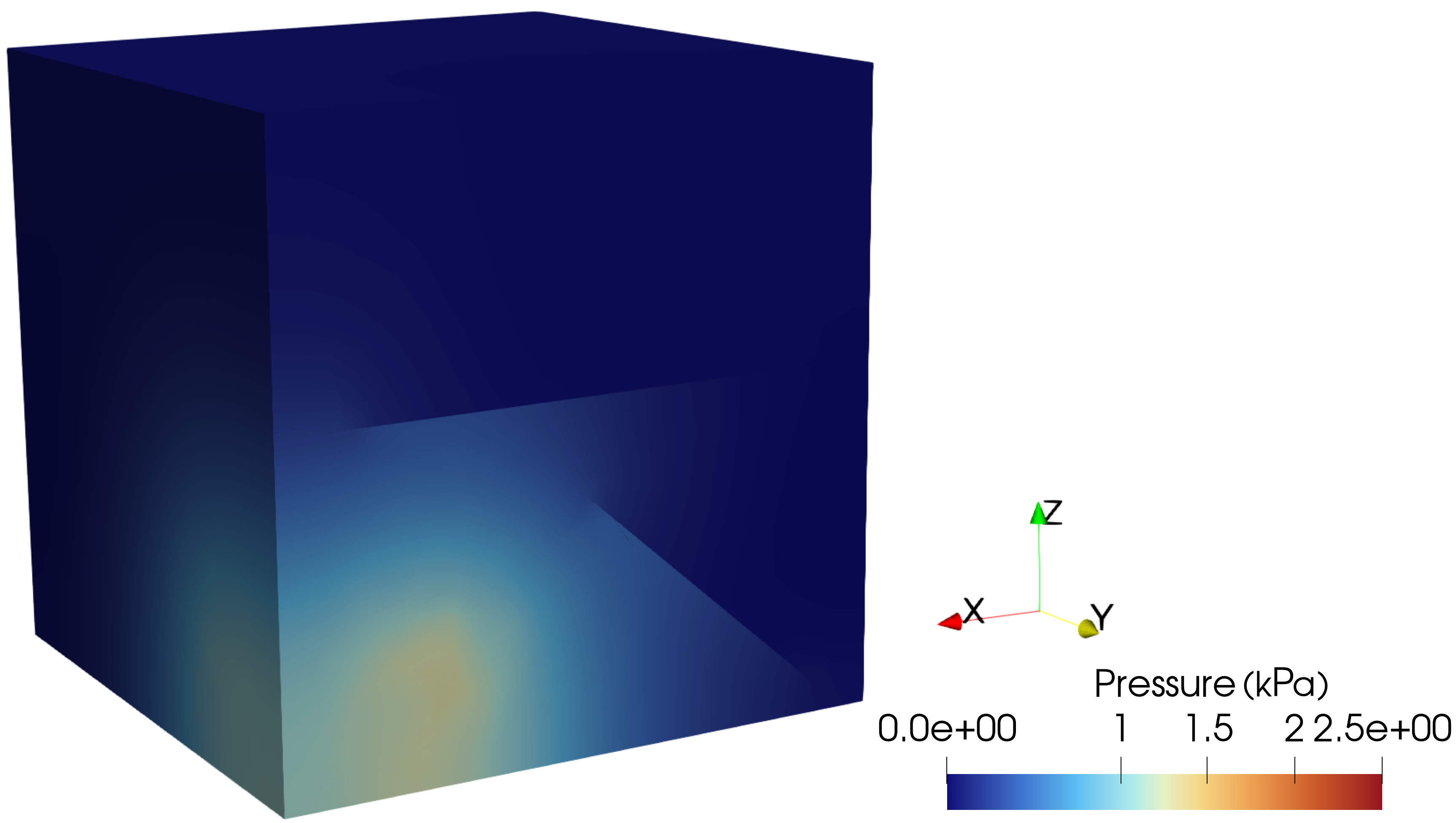}
        \caption{Pressure}
    \label{fig:Test3_Sol3Dt50Pressure}
    \end{subfigure}
    \begin{subfigure}[t]{0.32\textwidth}
        \includegraphics[width=0.75\linewidth]{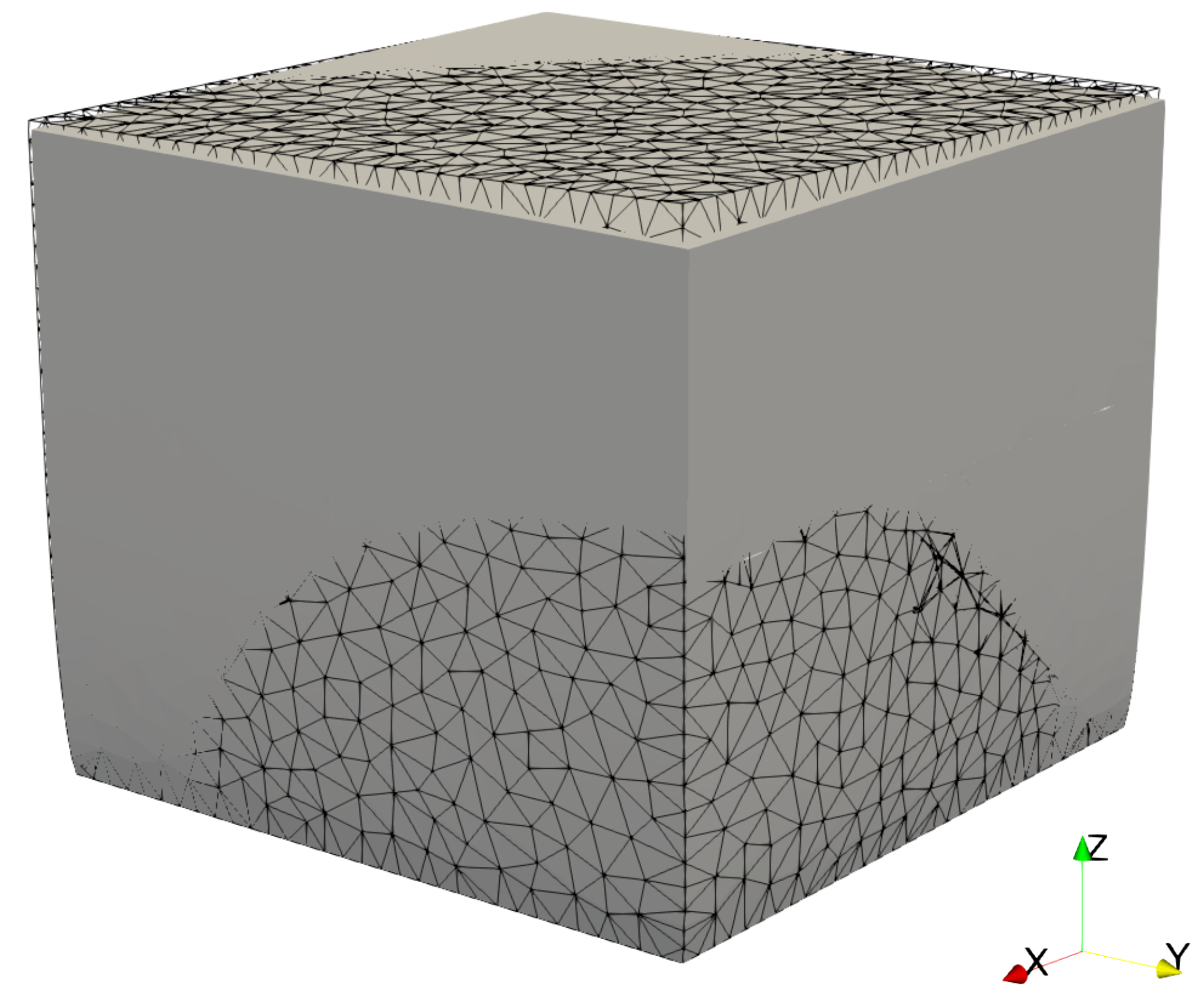}
        \caption{Displacement (200x)}
    \label{fig:Test3_Sol3Dt50Displacement}
    \end{subfigure}
    \begin{subfigure}[t]{0.32\textwidth}
        \includegraphics[width=\linewidth]{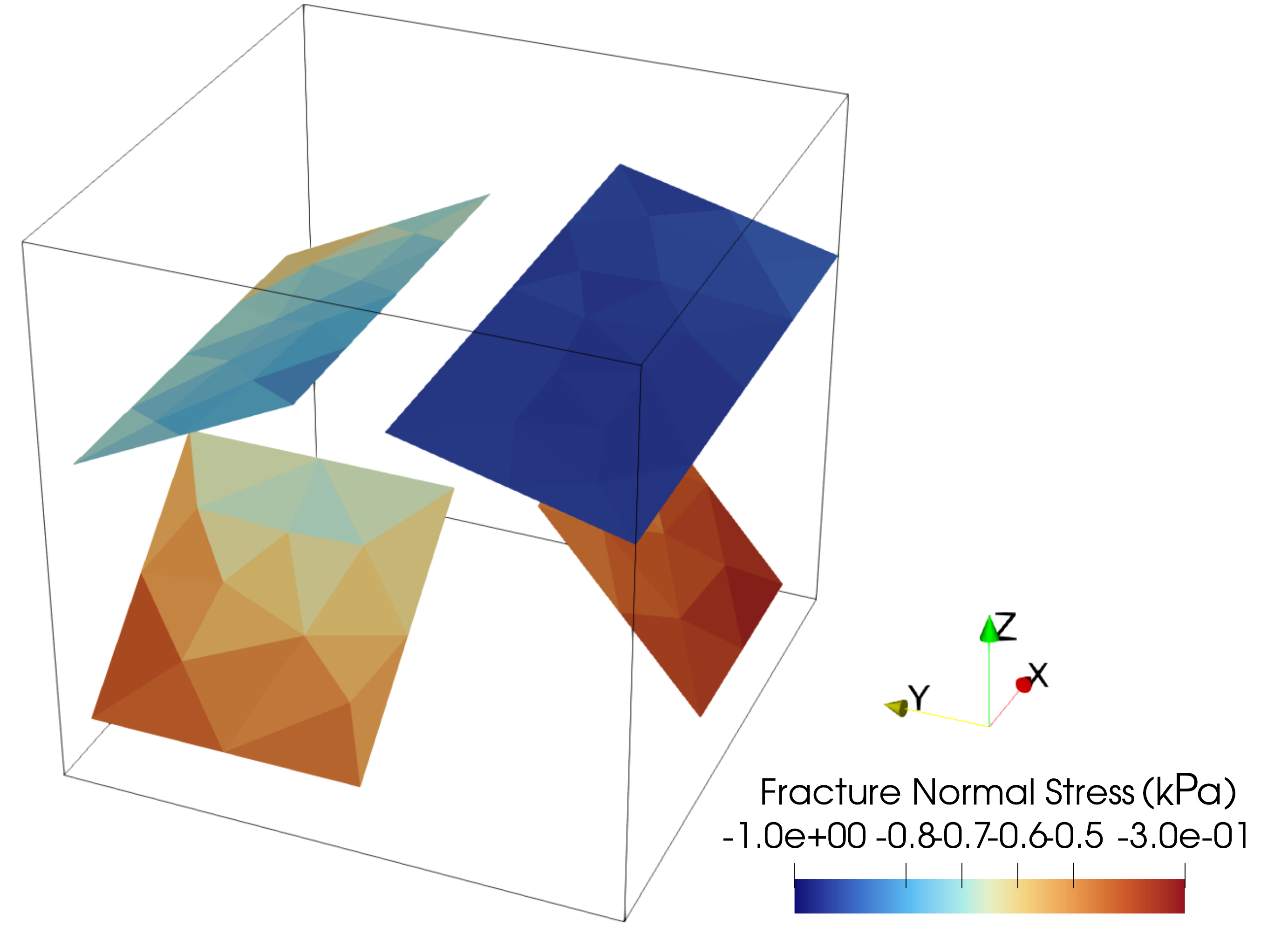}
        \caption{Normal stress on fracture}
    \label{fig:Test3_Sol3Dt50Fracture}
    \end{subfigure}
    \caption{Test 3: Solution at $t=50 \, \mathrm{s}$}
        \label{fig:Test3_3Dt50}
\end{figure}

\begin{figure}
    \centering
    \begin{subfigure}[t]{0.49\textwidth}
        \includegraphics[width=0.75\linewidth]{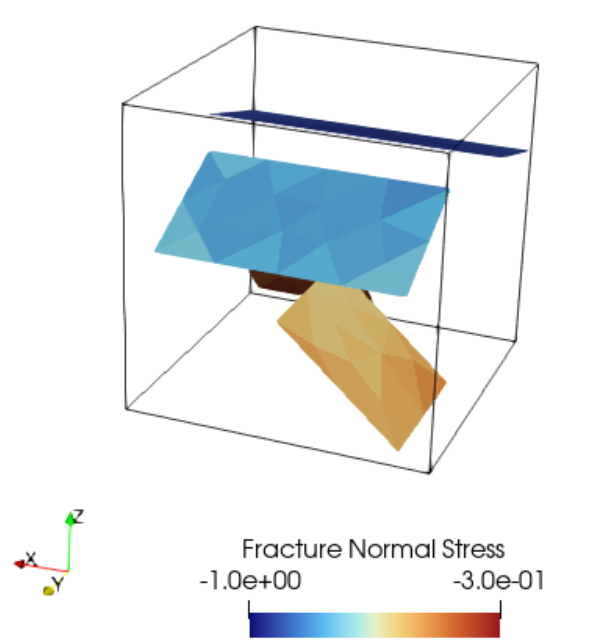}
        \caption{Normal stress solution at $t=0 \, \mathrm{s}$}
    \label{fig:Test3_NormalStress_t0}
    \end{subfigure}
    \begin{subfigure}[t]{0.49\textwidth}
        \includegraphics[width=0.75\linewidth]{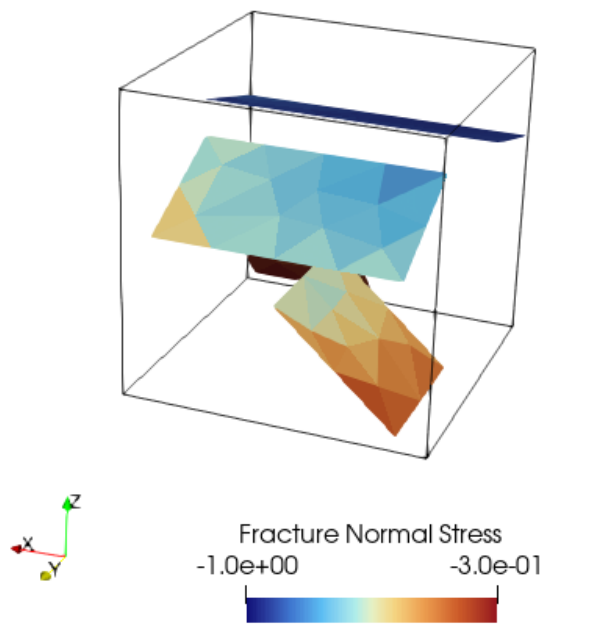}
        \caption{Normal stress solution at $t=50 \, \mathrm{s}$}
    \label{fig:Test3_NormalStress_t50}
    \end{subfigure}
    \caption{Test 3: Detail of normal stress solution on fractures $F_2$ and $F_3$.}
        \label{fig:Test3_NormalStress}
\end{figure}

\begin{figure}
    \centering
    \includegraphics[width=0.5\linewidth]{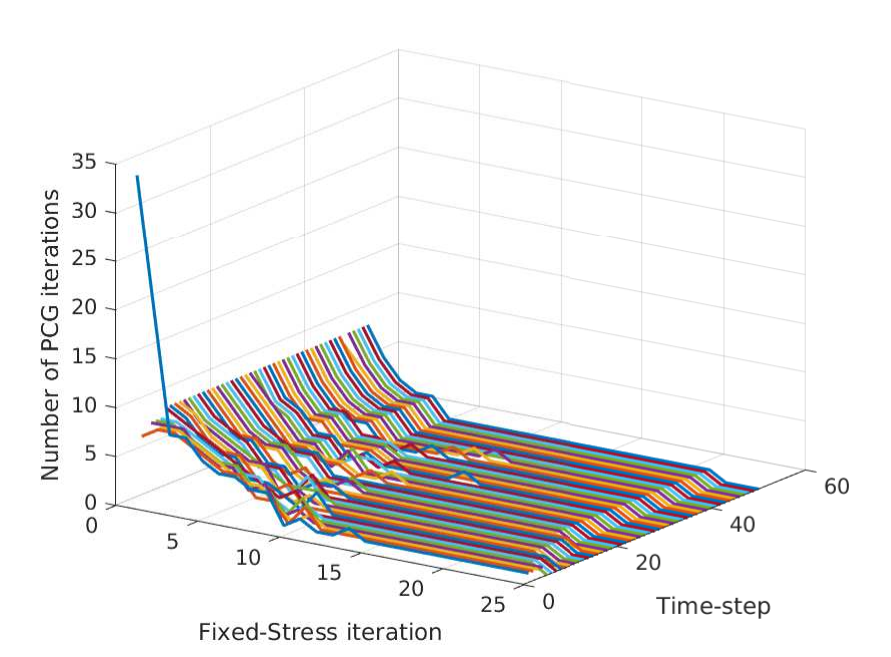}
    \caption{Test3: Fixed stress and PCG iteration count for each time step}
    \label{fig:Test3_iterationInfo}
\end{figure}

Figures~\ref{fig:Test3_3Dt25}-\ref{fig:Test3_3Dt50} show the solution at two different snapshots, at $t=25 \mathrm{s}$ and $t=50 \mathrm{s}$. Images~\ref{fig:Test3_Sol3Dt25Pressure} and \ref{fig:Test3_Sol3Dt50Pressure} report the computed pressure. It can be noticed that the solution is affected by the presence of the low-permeable fractures in the normal direction. This has an impact also on the deformation, reported shown in pictures~\ref{fig:Test3_Sol3Dt25Displacement} and \ref{fig:Test3_Sol3Dt50Displacement}. The displacements is represented, by deforming the elements of the mesh according to the computed solution, amplified 200 times. The displacement is clearly more pronounced where the higher values of the pressure are observed. Finally, images~\ref{fig:Test3_Sol3Dt25Fracture} and \ref{fig:Test3_Sol3Dt50Fracture} describe the normal stress on the fracture planes. It is observed that the solution on fractures $F_1$ and $F_4$ remains almost unchanged at tbe two considered times, whereas larger variations are seen on the other two fractures. The normal stress on fractures $F_2$ and $F_3$ is thus shown in Figure~\ref{fig:Test3_NormalStress}, with a different orientation to better highlight the solution, at $t=0 \, \mathrm{s}$ and at $t=50 \, \mathrm{s}$. It can be seen that the absolute value of the normal stress progressively reduces on fracture $F_3$, the reduction being stronger towards the boundary closer to the pressure source. On fracture $F_2$, instead, the absolute value of the normal stress increases, with respect to the initial value, closer to the top edge of the fracture, whereas it slightly reduces closer to the bottom. 
Figure~\ref{fig:Test3_iterationInfo} shows the behavior of the iterations for the fixed stress and the PCG solvers at the various time-steps to reach the desired stopping criteria, set to $\mathrm{tol}_{\mathrm{FS}}=10^{-7}$ and $\mathrm{tol}_{\mathrm{PCG}}=10^{-8}$. A trend similar to the one of the previous test is observed. The number of fixed-stress iterations remains stable across the different time-steps. The number of PCG iterations, instead rapidly drops from $5-10$ iterations for the first $5-10$ fixed-stress iterations to only $1-2$ PCG iterations for the remaining fixed-stress loops. This behavior is similar for all time-steps. 
\section{Conclusions}
\label{sec:Conclusions}

The solution of coupled flow and mechanics problems in mixed-dimensional domains is computationally demanding due to the large size and geometric complexity of the computational domain. The proposed approach addresses these challenges by combining an operator-splitting technique, namely the fixed-stress scheme, with a domain decomposition strategy specifically designed for mixed-dimensional problems on non-conforming meshes. The domain decomposition framework enables the bulk domain and the lower-dimensional inclusions to be solved independently, while preserving their coupling. The use of non-conforming meshes greatly simplifies mesh generation, as the mesh resolution can be selected independently of the geometric complexity of the fracture network. The numerical results demonstrate the effectiveness and robustness of the proposed methodology. In particular, the combination of the fixed-stress scheme with the Conjugate Gradient solver for the flow problem proves to be particularly efficient, making the overall solution strategy well suited for the simulation of large-scale fractured porous media.   
\printbibliography

@Article{BPSa,
  AUTHOR =       {S. Berrone and S. Pieraccini and S. Scial{\`o}},
  TITLE =        {A {PDE}-constrained optimization formulation for discrete
                  fracture network flows},
  JOURNAL =      {SIAM J. Sci. Comput.},
  FJOURNAL =     {SIAM Journal on Scientific Computing},
  VOLUME =       35,
  YEAR =         2013,
  NUMBER =       2,
  PAGES =        {B487--B510},
  ISSN =         {1064-8275},
  CODEN =        {SJOCE3},
  MRCLASS =      {65N30 (49M25 65K10 65N15 65N50 76S05)},
  MRNUMBER =     3038028,
  MRREVIEWER =   {Marius Ghergu},
  DOI =          {10.1137/120865884},
}

@article{Berge2020,
author = {Berge, R. L. and Berre, I. and Keilegavlen, E. and Nordbotten, J. M. and Wohlmuth, B.},
title = {Finite volume discretization for poroelastic media with fractures modeled by contact mechanics},
journal = {International Journal for Numerical Methods in Engineering},
volume = {121},
number = {4},
pages = {644-663},
doi = {10.1002/nme.6238},
year = {2020}
}

@article{Both2017,
title = {Robust fixed stress splitting for Biot’s equations in heterogeneous media},
journal = {Applied Mathematics Letters},
volume = {68},
pages = {101-108},
year = {2017},
issn = {0893-9659},
doi = {10.1016/j.aml.2016.12.019},
author = {Both, J. W. and Borregales, M. and Nordbotten, J. M. and Kumar, K. and Radu, F. A. },
}

@article{Brezina2024,
author = {Březina, J. and Stebel, J.},
title = {Discrete fracture-matrix model of poroelasticity},
journal = {ZAMM - Journal of Applied Mathematics and Mechanics / Zeitschrift für Angewandte Mathematik und Mechanik},
volume = {104},
number = {4},
pages = {e202200469},
doi = {10.1002/zamm.202200469},
year = {2024}
}

@article{BYZ2017,
	author = {{Bukač, M.} and {Yotov, I.} and {Zunino, P.}},
	title = {Dimensional model reduction for flow through fractures   in poroelastic media},
	DOI= "10.1051/m2an/2016069",
	journal = {ESAIM: Mathematical Modelling and Numerical Analysis},
	year = 2017,
	volume = 51,
	number = 4,
	pages = "1429-1471",
}

@article{Garipov2016,
  author    = {Garipov, T. T. and Karimi-Fard, M. and Tchelepi, H. A.},
  title     = {Discrete fracture model for coupled flow and geomechanics},
  journal   = {Computational Geosciences},
  year      = {2016},
  volume    = {20},
  number    = {1},
  pages     = {149--160},
  doi       = {10.1007/s10596-015-9554-z},
  issn      = {1573-1499}
}

@article{Kadeethum2020,
author = {Kadeethum, T. and Lee, S. and Nick, H. M.},
title = {{Finite Element Solvers for Biot’s Poroelasticity Equations in Porous Media}},
journal = {Mathematical Geosciences},
volume = {52},
pages = {977–1015},
doi = {10.1007/s11004-020-09893-y},
year = {2020}
}

@article{KIM2011CMAME,
title = {Stability and convergence of sequential methods for coupled flow and geomechanics: Fixed-stress and fixed-strain splits},
journal = {Computer Methods in Applied Mechanics and Engineering},
volume = {200},
number = {13},
pages = {1591-1606},
year = {2011},
issn = {0045-7825},
doi = {10.1016/j.cma.2010.12.022},
author = {J. Kim and H.A. Tchelepi and R. Juanes}
}

@article{KIM2011Undrained,
title = {Stability and convergence of sequential methods for coupled flow and geomechanics: Drained and undrained splits},
journal = {Computer Methods in Applied Mechanics and Engineering},
volume = {200},
number = {23},
pages = {2094-2116},
year = {2011},
issn = {0045-7825},
doi = {https://doi.org/10.1016/j.cma.2011.02.011},
url = {https://www.sciencedirect.com/science/article/pii/S0045782511000466},
author = {J. Kim and H.A. Tchelepi and R. Juanes}
}

@article{JHA2014,
author = {Jha, B. and Juanes, R},
title = {Coupled multiphase flow and poromechanics: A computational model of pore pressure effects on fault slip and earthquake triggering},
journal = {Water Resources Research},
volume = {50},
number = {5},
pages = {3776-3808},
doi = {10.1002/2013WR015175},
year = {2014}
}

@article{Mikelic2013,
  author    = {Mikeli{\'c}, A. and Wheeler, M. F.},
  title     = {Convergence of iterative coupling for coupled flow and geomechanics},
  journal   = {Computational Geosciences},
  year      = {2013},
  volume    = {17},
  number    = {3},
  pages     = {455--461},
  doi       = {10.1007/s10596-012-9318-y},
  issn      = {1573-1499}
}

@article{S2024Finel,
author = {Scialò, S.},
publisher = {Elsevier B.V},
title = {A five field formulation for flow simulations in porous media with fractures and barriers via an optimization based domain decomposition method},
volume = {238},
year = {2024},
copyright = {2024 The Author(s)},
issn = {0168-874X},
journal = {Finite elements in analysis and design},
language = {eng},
pages = {104204},
doi={10.1016/j.finel.2024.104204}
}

@article{SETTARI1998_FixedStress,
    author = {Settari, A. and Mounts, F. M.},
    title = {A Coupled Reservoir and Geomechanical Simulation System},
    journal = {SPE Journal},
    volume = {3},
    number = {03},
    pages = {219-226},
    year = {1998},
    month = {09},
    issn = {1086-055X},
    doi = {10.2118/50939-PA}
}

@article{Storvik2019,
author = {Storvik, E. and Both, J. W. and Kumar, K. and Nordbotten, J. M. and Radu, F. A.},
title = {On the optimization of the fixed-stress splitting for Biot's equations},
journal = {International Journal for Numerical Methods in Engineering},
volume = {120},
number = {2},
pages = {179-194},
doi = {10.1002/nme.6130},
year = {2019}
}

@article{SukumarXFEM2000,
author = {Sukumar, N. and Moës, N. and Moran, B. and Belytschko, T.},
title = {Extended finite element method for three-dimensional crack modelling},
journal = {International Journal for Numerical Methods in Engineering},
volume = {48},
number = {11},
pages = {1549-1570},
doi = {10.1002/1097-0207(20000820)48:11<1549::AID-NME955>3.0.CO;2-A},
year = {2000}
}

@book{terzaghi,
author = {Terzaghi,K.},
address = {London New York},
title = {Theoretical soil mechanics},
edition = {4th ed.},
language = {eng},
publisher = {Chapman and Hall Wiley},
year = {1947},
}

@article{Tetgen2015,
author = {Si, H.},
title = {TetGen, a Delaunay-Based Quality Tetrahedral Mesh Generator},
year = {2015},
publisher = {Association for Computing Machinery},
address = {New York, NY, USA},
volume = {41},
number = {2},
issn = {0098-3500},
doi = {10.1145/2629697},
journal = {ACM Transactions on Mathematical Software},
}

@article{TURSKA1993,
title = {On convergence conditions of partitioned solution procedures for consolidation problems},
journal = {Computer Methods in Applied Mechanics and Engineering},
volume = {106},
number = {1},
pages = {51-63},
year = {1993},
issn = {0045-7825},
doi = {https://doi.org/10.1016/0045-7825(93)90184-Y},
url = {https://www.sciencedirect.com/science/article/pii/004578259390184Y},
author = {E. Turska and B.A. Schrefler},
}

@article{Xfem99_1,
author = {Belytschko, T. and Black, T.},
title = {Elastic crack growth in finite elements with minimal remeshing},
journal = {International Journal for Numerical Methods in Engineering},
volume = {45},
number = {5},
pages = {601-620},
doi = {10.1002/(SICI)1097-0207(19990620)45:5<601::AID-NME598>3.0.CO;2-S},
year = {1999}
}

@article{xfem99_2,
author = {Moës, Nicolas and Dolbow, John and Belytschko, Ted},
title = {A finite element method for crack growth without remeshing},
journal = {International Journal for Numerical Methods in Engineering},
volume = {46},
number = {1},
pages = {131-150},
doi = {10.1002/(SICI)1097-0207(19990910)46:1<131::AID-NME726>3.0.CO;2-J},
year = {1999}
}

@article{Aagaard2013,
author = {Aagaard, B. T. and Knepley, M. G. and Williams, C. A.},
title = {A domain decomposition approach to implementing fault slip in finite-element models of quasi-static and dynamic crustal deformation},
journal = {Journal of Geophysical Research: Solid Earth},
volume = {118},
number = {6},
pages = {3059-3079},
doi = {https://doi.org/10.1002/jgrb.50217},
year = {2013}
}

@article{XfemReview,
author = {Fries, Thomas-Peter and Belytschko, Ted},
title = {The extended/generalized finite element method: An overview of the method and its applications},
journal = {International Journal for Numerical Methods in Engineering},
volume = {84},
number = {3},
pages = {253-304},
doi = {10.1002/nme.2914},
year = {2010}
}
\end{document}